\definecolor{myblue}{RGB}{80,80,160}
\definecolor{mygreen}{RGB}{80,160,80}
\definecolor{myred}{RGB}{160,20,20}
\definecolor{mygrey}{RGB}{100,100,100}
\tikzstyle{mystyle}=[line width=0.3mm, myblue] 
\tikzstyle{vertex}  = [draw,circle,fill] 
\tikzstyle{xnode}  = [draw=black, fill=mygreen]
\tikzstyle{xnodelabel}  = [black, below, yshift=-0.1cm]
\tikzstyle{ygnode}  = [draw=black, fill=myblue]
\tikzstyle{ygnodelabel}  = [black, above, yshift=0.1cm]
\tikzstyle{unode}  = [draw=black, fill=myblue]
\tikzstyle{vnode}  = [draw=black, fill=mygreen]
\tikzstyle{unodelabel}  = [black, left, xshift=-0.25cm, yshift=-0.05cm]
\tikzstyle{vnodelabel}  = [black, right, xshift=0.25cm, yshift=-0.05cm]
\tikzstyle{myedge}  = [color=black, semithick]
\tikzstyle{mylightedge}  = [color=mygrey, thin]
\tikzstyle{edgelabel}  = [font=\scriptsize,color=myred]
\tikzstyle{uedgelabel}  = [font=\scriptsize,color=myred,above,sloped,pos=0.65]
\tikzstyle{vedgelabel}  = [font=\scriptsize,color=myred,above,sloped,pos=0.35]
\tikzstyle{specialnode}=[line width=0.65mm, myred] 
\tikzstyle{myroundrect}=[semithick,color=mygrey,rounded corners=10]
\def\mycirc{12pt}
\def\mybigcirc{24pt}
\def\mylittlecirc{6pt}
\def\myk{p}
\numberwithin{equation}{section}
\numberwithin{figure}{section}
\newcommand\R{\mathbb{R}}
\newcommand\C{\mathbb{C}}
\newcommand\Z{\mathbb{Z}}
\newcommand\Om{\Omega}
\newcommand\TT{\mathcal{T}}
\renewcommand\S{\mathcal{S}}
\newcommand\Snm{\mathcal{S}_{n,m}}
\renewcommand\leq{\leqslant}
\renewcommand\geq{\geqslant}
\newcommand\sbt{\subset}
\newcommand{\supp}{\operatorname{supp}}
\theoremstyle{plain}
\newtheorem{thm}{Theorem}[section]
\newtheorem{lem}[thm]{Lemma}
\newtheorem{prop}[thm]{Proposition}
\newtheorem*{claim*}{Claim}
\newcommand{\thmref}[1]{Theorem~\ref{#1}}
\newcommand{\secref}[1]{Section~\ref{#1}}
\newcommand{\lemref}[1]{Lemma~\ref{#1}}
\newcommand{\defref}[1]{Definition~\ref{#1}}
\newcommand{\propref}[1]{Proposition~\ref{#1}}
\newcommand{\figref}[1]{Figure~\ref{#1}}
\theoremstyle{definition}
\newtheorem{definition}[thm]{Definition}
\newtheorem*{definition*}{Definition}
\newtheorem*{remarks*}{Remarks}
\newtheorem*{remark*}{Remark}
\newenvironment{enumerate-roman}
{\begin{enumerate}
\addtolength{\itemsep}{5pt}
}
{\end{enumerate}}
\newenvironment{enumerate-alph}
{\begin{enumerate}
\addtolength{\itemsep}{5pt}
}
{\end{enumerate}}
\newenvironment{enumerate-num}
{\begin{enumerate}
\addtolength{\itemsep}{5pt}
}
{\end{enumerate}}
\newenvironment{enumerate-text}
{\begin{enumerate}
\addtolength{\itemsep}{5pt}
}
{\end{enumerate}}
\begin{document}

\title{Support of extremal doubly stochastic arrays}

\author{Mark Mordechai Etkind}
\address{Department of Mathematics, Bar-Ilan University, Ramat-Gan 5290002, Israel}
\email{mark.etkind@mail.huji.ac.il}

\author{Nir Lev}
\address{Department of Mathematics, Bar-Ilan University, Ramat-Gan 5290002, Israel}
\email{levnir@math.biu.ac.il}

\date{January 12, 2025}
\subjclass[2020]{05B20, 05C05, 15B51}
\keywords{Doubly stochastic arrays, extremal matrices, Birkhoff's theorem}
\thanks{Research supported by ISF Grant No.\ 1044/21 and ERC Starting Grant No.\ 713927.}

\begin{abstract}
An $n \times m$ array with nonnegative entries is called \emph{doubly stochastic} if the sum of its entries at each row is $m$ and at each column is $n$. The set of all $n \times m$ doubly stochastic arrays is a convex polytope with finitely many extremal points. The main result of this paper characterizes the possible sizes of the supports of all extremal $n \times m$ doubly stochastic arrays. In particular we prove that the minimal size of the support of an $n \times m$ doubly stochastic array is $n + m - \gcd(n,m)$. Moreover, for $m=kn+1$ we also characterize the structure of the support of the extremal arrays.
\end{abstract}

\maketitle

% =======================================

\section{Introduction}

\subsection{} 
An $n \times m$ matrix $A = (a_{ij})$ is called
a \emph{doubly stochastic array} (with uniform marginals) if
its entries are nonnegative,  $a_{ij} \geq 0$, and 
\begin{equation}
\label{eqDS1.1}
\sum_{j=1}^{m} a_{ij} = m, \quad 1 \leq i \leq n,
\end{equation}
\begin{equation}
\label{eqDS1.2}
\sum_{i=1}^{n} a_{ij} = n, \quad 1 \leq j \leq m,
\end{equation}
that is, the sum of the entries at each row is $m$ and at each column is $n$.

For example (see \cite[p.\ 65]{Car96}), the two matrices
\begin{equation}
\label{eqDS1.3}
\begin{bmatrix}
3 & 0 & 0 & 1\\
0 & 3 & 0 & 1\\
0 & 0 & 3 & 1
\end{bmatrix}
\quad \text{and} \quad 
\begin{bmatrix}
1 & 0 & 0 & 3\\
2 & 2 & 0 & 0\\
0 & 1 & 3 & 0
\end{bmatrix}
\end{equation}
are  $3 \times 4$ doubly stochastic arrays.

We will use $\Snm$ to denote the set of all
 $n \times m$  doubly stochastic arrays.
This set is  a convex polytope 
(of dimension $(n-1)(m-1)$) in the linear space of all
$n \times m$ matrices. Hence $\Snm$ coincides
with  the convex hull of its finitely many extremal points. We
recall that an element $A \in \Snm$
is  \emph{extremal}
if it cannot be represented as a convex
combination of two other elements
of   $\Snm$ that are
both different from $A$.

For \emph{square} $n \times n$ matrices,
a  classical theorem due to Birkhoff \cite{Bir46}
characterizes the extremal 
doubly stochastic arrays
as follows: \emph{an array $A \in \S_{n,n}$ is 
extremal if and only if $\frac1{n} A$ is a permutation
matrix}.

Let us say that two $n \times m$ matrices
are \emph{equivalent} if one can be transformed into the other
by a permutation of rows and columns.
In this language, Birkhoff's theorem states that
up to equivalence there is only one extremal
 $n \times n$  doubly stochastic array,
namely, the identity matrix times the scalar $n$.

\emph{Nonsquare} $n \times m$ extremal  doubly stochastic arrays have
been studied by different authors, see 
\cite{Car96}, \cite{Li96} and the references therein.
This case is more complicated and in general 
there is more than one equivalence class of extremal
arrays. For example,
the two matrices in \eqref{eqDS1.3}
are both extremal but not equivalent.

\subsection{}
In \cite[Question 7]{KP22} the authors posed the following question,
which arose in connection with a tiling problem
in finite abelian groups: what is
the smallest possible size of the support of a
  doubly stochastic  $n \times m$ array?

We recall that 
 the \emph{support} of an $n \times m$
matrix $A = (a_{ij})$ is the set
\begin{equation}
\supp A = \{(i,j) : a_{ij} \neq 0\}.
\end{equation}

In this paper we  obtain a complete answer to the
 question above:

\begin{thm}
\label{thmA6.7}
For all $n,m$ the minimal size of the
support of an array $A \in \Snm$ is
equal to $n + m - \gcd(n,m)$.
\end{thm}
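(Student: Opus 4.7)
The plan is to prove both inequalities in $\min\{|\supp A| : A \in \Snm\} = n + m - d$, where $d := \gcd(n,m)$. Write $n = da$ and $m = db$ with $\gcd(a,b) = 1$. I would separately establish the upper bound by an explicit construction and the lower bound by a bipartite-graph analysis of extremal arrays.

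For the upper bound I would exhibit an array $A \in \Snm$ with exactly $n + m - d$ nonzero entries. The construction is a $d$-block diagonal array whose $k$-th diagonal block is $d \cdot B$, where $B$ is the $a \times b$ matrix with $B_{ij}$ equal to the length of the overlap between the intervals $[(i{-}1)b, ib]$ and $[(j{-}1)a, ja]$ in $\R$. Each row of $B$ corresponds to a length-$b$ interval partitioned by the column boundaries, so $B$ has row sums $b$ and, symmetrically, column sums $a$; scaling by $d$ yields row sum $m$ and column sum $n$ on each block. Because $\gcd(a,b) = 1$, the only coincidences among the partition points $\{kb\}_{k=0}^{a} \cup \{ka\}_{k=0}^{b}$ occur at the endpoints $0$ and $ab$, so these points divide $[0, ab]$ into $a + b - 1$ subintervals, each contributing exactly one nonzero entry of $B$. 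The total support size is then $d(a + b - 1) = n + m - d$.

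For the lower bound, I first reduce to extremal arrays: any $A \in \Snm$ is a convex combination $A = \sum \lambda_i A_i$ of extremals $A_i$ with $\lambda_i > 0$, and nonnegativity forces $\supp A_i \sbt \supp A$ for each $i$, so the minimum support size is attained on an extremal array. Now fix an extremal $A \in \Snm$ and let $G$ be the bipartite graph on $[n] \sqcup [m]$ with edge set $\supp A$. Since every row sum is $m \geq 1$ and every column sum is $n \geq 1$, every vertex of $G$ has positive degree. The key observation is a standard perturbation: if $G$ contained a cycle, then alternately adding and subtracting $\eps$ along the cycle edges would produce, for small $\eps > 0$, two distinct elements of $\Snm$ whose average is $A$, contradicting extremality. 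Hence $G$ is a forest and $|\supp A| = (n + m) - c$, where $c$ is the number of connected components of $G$.

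It remains to show $c \leq d$. For each connected component $H$ of $G$, with row-set $R$ and column-set $C$, every nonzero entry of $A$ in a row of $R$ lies in a column of $C$ and vice versa; summing the entries of $A$ on $R \times C$ in two different ways gives $|R| \cdot m = |C| \cdot n$, i.e., $|R|/|C| = a/b$. Coprimality of $a$ and $b$ then forces $|R| \geq a$ and $|C| \geq b$, so the partition of the $n = da$ rows among the components consists of at most $d$ parts, and $|\supp A| \geq n + m - d$. The main delicate point is precisely this ratio relation for components combined with coprimality: it is what converts the naive bound $|\supp A| \geq n + m - c$ into the sharp bound with $d$. The forest property coming from extremality and the staircase construction are classical, so the connected-component analysis is where the arithmetic content of $\gcd(n,m)$ actually enters.
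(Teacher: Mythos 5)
Your proof is correct, and its arithmetic core is the same as the paper's: the lower bound comes from the observation that each connected component of the bipartite graph must have its row-count to column-count ratio equal to $n/m$, so coprimality of $n/d$ and $m/d$ caps the number of components at $d$; the upper bound comes from a $d$-block-diagonal array whose blocks are staircase matrices. Two differences are worth noting. First, your lower bound detours through extremality (decompose $A$ into extremals, use the cycle-perturbation argument to get a forest, then count components); the paper instead applies the inequality $|E|\geq n+m-r$ directly to an \emph{arbitrary} graph on $n+m$ vertices with $r$ components, which removes the need for Minkowski's theorem and the forest property and, as a bonus, proves the bound for arbitrary complex-valued matrices with the prescribed row and column sums (the paper's Theorem 3.1), not just nonnegative ones. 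Second, your interval-overlap block $B_{ij}=\bigl|[(i-1)b,ib]\cap[(j-1)a,ja]\bigr|$ is the continuous avatar of the paper's ``discrete trapezoid'' $\chi_n\ast\chi_m$ lifted to $\Z_n\times\Z_m$ via the Chinese remainder theorem; the two constructions produce equivalent matrices (e.g.\ for $n=3$, $m=4$ both give the right-hand matrix of the paper's display \eqref{eqDS1.3} up to reversing the row and column order), and your verification that the $a+b-1$ refinement intervals are in bijection with the nonzero entries is sound. So the proposal buys a slightly more self-contained construction at the cost of a less general and slightly longer lower-bound argument.
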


This result 
was also obtained 
by M.\ Loukaki in  \cite{Lou23} and 
we acknowledge her priority. 
In \cite[Section 4]{KP22}
the authors established that
the result holds
in the special case where either
$m = kn$ or $m = kn+1$.

\subsection{}
It is known, see \cite[Corollary 2]{Li96}, that an array
$A \in \Snm$ is extremal if and only if  its support
is minimal with respect to inclusion, that is,
there is no $B \in \Snm$ such that $\supp B$
is a proper subset of $\supp A$.
This implies that any array
which minimizes the
size of the support must also be an extremal array.

For example, each one of
the two $3 \times 4$  matrices
in \eqref{eqDS1.3}
has $6 = 3+4-1$ nonzero entries,
which is 
the minimal size of the support of
a doubly stochastic $3 \times 4$ array. 
It follows that these two matrices
are extremal.

One can thus pose a more general version
of the question above, namely, what are
the possible sizes of the support
of an extremal $n \times m$ 
 doubly stochastic  array?

In the special case
where  one of $n,m$ divides the other,
say,  if $m=kn$, it has been known
that
there is only one equivalence class of 
extremal $n \times kn$ doubly stochastic arrays,
and the support
size of every extremal array 
is $kn$,  see \cite[Proposition 4]{Car96}.

Our next result provides a complete answer 
to the question
in the more complicated  case where
 neither one of $n,m$ divides the other.
That is, the result characterizes 
the possible sizes of the
supports   of all extremal $n \times m$ arrays.

\begin{thm}
\label{thmA5.1}
Assume that neither one of $n,m$ divides the other.
Then any extremal array  $A \in \Snm$
has support size of   the form
\begin{equation}
\label{eqA5.1.1}
|\supp A| = n + m - s
\end{equation}
where 
$1 \leq s \leq \gcd(n,m)$. 
Conversely,  for any integer 
$s$, 
$1 \leq s \leq \gcd(n,m)$,
 there exists an  extremal array 
 $A \in \Snm$  whose support size 
is given by \eqref{eqA5.1.1}.
\end{thm}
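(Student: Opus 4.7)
My plan begins with the standard characterization of extremal arrays in $\Snm$ via bipartite graphs: associate to $A$ the bipartite graph $G_A$ with vertex classes $\{r_1,\dots,r_n\}$ and $\{c_1,\dots,c_m\}$ and an edge $r_i c_j$ for each $(i,j) \in \supp A$. I claim $A$ is extremal if and only if $G_A$ is a forest. Indeed, a cycle in $G_A$ allows one to place alternating $\pm\eps$ weights on its edges and produce two distinct arrays in $\Snm$ averaging to $A$; conversely, when $G_A$ is a forest, the only matrix with zero row and column sums supported on $\supp A$ is the zero matrix, so $A$ admits no nontrivial convex decomposition. Since no row or column of $A$ vanishes, $G_A$ has no isolated vertex, and hence
\[
|\supp A| = |V(G_A)| - c(G_A) = n + m - c(G_A),
\]
where $c(G_A)$ is the number of connected components.

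I then bound $c(G_A)$. Write $n = d\al$, $m = d\be$ with $d = \gcd(n,m)$ and $\gcd(\al,\be) = 1$. Summing entries in a component with $n_i$ row vertices and $m_i$ column vertices in two ways gives $n_i m = m_i n$, so $n_i = a_i \al$ and $m_i = a_i \be$ for some $a_i \in \mathbb{Z}_+$, and since $\sum_i n_i = n$ we obtain $\sum_i a_i = d$. This forces $c(G_A) \le d$, and combined with the trivial lower bound $c(G_A) \ge 1$ yields the forward direction $|\supp A| = n + m - s$ with $s \in [1,d]$.

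For the converse, given $s \in [1,d]$, I plan a block diagonal construction. Choose the partition $d = (d-s+1) + \underbrace{1 + \cdots + 1}_{s-1}$ and place $s$ blocks along the diagonal of the $n \times m$ grid: one of shape $(d-s+1)\al \times (d-s+1)\be$ and $s-1$ of shape $\al \times \be$. Each block is filled with a tree-structured nonnegative array with local row sums $m$ and column sums $n$; the small blocks exist by \thmref{thmA6.7}, since $\gcd(\al,\be)=1$ forces every extremal of $\S_{\al,\be}$ to be a tree of support size $\al + \be - 1$. The assembled array has exactly $s$ components and total support $n + m - s$, hence is extremal.

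The main obstacle is the construction of a tree-structured nonnegative $k\al \times k\be$ array (with the given marginals) for $k = d-s+1 \ge 2$, where the target support $k\al + k\be - 1$ strictly exceeds the minimum $k\al + k\be - k$. I plan an inductive ``cycle-push'' procedure: starting from a block diagonal of $k$ small $\al \times \be$ tree extremals, I find a four-cycle straddling two sub-blocks along which mass can be pushed to zero out one entry while creating two new cross-sub-block edges; this deletes one edge, adds two, and merges two sub-components into a single tree, a net gain of one edge and loss of one component. Iterating $k-1$ times produces the desired connected tree spanning the entire $k\al \times k\be$ block. The key technical step is verifying that a valid cycle and a push preserving nonnegativity always exist at every stage, which I expect to follow from straightforward bipartite path combinatorics together with the uniformity of the marginals within each sub-block.
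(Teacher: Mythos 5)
Your forward direction coincides with the paper's (\thmref{thmD1.1} plus \propref{propPR5.4}): acyclicity gives $|\supp A|=n+m-c$ and the component-counting argument gives $c\le d$. For the converse, however, you take a genuinely different route. The paper first proves the case $m-n=d$ by an explicit construction from four weighted-tree prototypes glued along special vertices (\lemref{lemE4.1}), and then reaches general $n,m$ by an induction driven by the Euclidean algorithm, prepending $D_n$ blocks. You instead reduce immediately to the $s=1$ case via a block-diagonal decomposition $d=(d-s+1)+1+\dots+1$, and produce the one connected block of shape $k\al\times k\be$ by local surgery: starting from $k$ minimal-support sub-blocks, repeatedly push mass around a $4$-cycle straddling two components so as to kill one edge, create two, and merge the components into a larger tree. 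This is more economical than the paper's argument (no prototypes, no Euclidean induction) at the cost of being less explicit about the resulting entries.

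The one step you defer is exactly the one that needs an argument, and it is where the hypothesis that neither of $n,m$ divides the other gets used. If the two diagonal entries $a_{i_1j_1},a_{i_2j_2}$ on your $4$-cycle are \emph{equal}, the push annihilates both, deleting two edges and adding two: the component count does not drop and no progress is made. So you must exhibit, across any two components to be merged, a pair of support entries with \emph{distinct} values. This holds because an $\al\times\be$ doubly stochastic block with tree support cannot have all its nonzero entries equal when $\al,\be\ge 2$: a common value $c$ would have to divide each row sum $\be$ and each column sum $\al$, hence $c=1$ by coprimality, forcing $\al\be=\al+\be-1$, i.e.\ $\al=1$ or $\be=1$. (When $\al=1$, i.e.\ $n\mid m$, every block is constant and the surgery genuinely fails --- consistent with \propref{propPR5.7}.) With that observation, and the easy check that deleting the smaller entry and adding the two cross edges yields a spanning tree of the union of the two components, your construction goes through.
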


For certain special values of $n$ and $m$, 
examples of extremal $n \times m$ arrays whose support is
not of minimal size were also constructed in \cite[Theorem~3]{Lou23}.

\subsection{}
It follows from \thmref{thmA5.1} that if
$n,m$ are \emph{coprime}, then all the
extremal $n \times m$ arrays
are of the same support size, 
namely, $n+m-1$.
Hence an array  is extremal
if and only if it minimizes the support size
among all the  $n \times m$
  doubly stochastic  arrays.
So in this case  one can characterize
extremality by the size of the support:

\begin{thm}
\label{thmA5.2}
Let $n,m$ be coprime. Then 
an array  $A \in \Snm$  is 
extremal if and only if
\begin{equation}
\label{eqA5.2.1}
|\supp A| = n + m - 1.
\end{equation}
\end{thm}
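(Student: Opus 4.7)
The plan is to derive \thmref{thmA5.2} directly as a corollary of the two main results already stated, \thmref{thmA6.7} and \thmref{thmA5.1}, together with the characterization (recalled in the introduction from \cite[Corollary 2]{Li96}) that extremal arrays are exactly those whose support is minimal with respect to inclusion.

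For the forward direction, suppose $A \in \Snm$ is extremal. If neither of $n, m$ divides the other, \thmref{thmA5.1} gives $|\supp A| = n + m - s$ for some integer $1 \leq s \leq \gcd(n,m)$; the coprimality hypothesis $\gcd(n,m) = 1$ then forces $s = 1$, which is \eqref{eqA5.2.1}. The only remaining case is when one of $n, m$ divides the other, which combined with coprimality forces $\min(n,m) = 1$; in that degenerate situation $\Snm$ consists of the single array with all entries equal to $1$, whose support has size $\max(n,m) = n + m - 1$.

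For the converse, suppose $|\supp A| = n + m - 1$. By \thmref{thmA6.7} and coprimality, the quantity $n + m - 1$ is the minimum possible value of $|\supp B|$ over all $B \in \Snm$. Hence no $B \in \Snm$ can satisfy $\supp B \subsetneq \supp A$, since such a $B$ would have strictly smaller support size than $A$. Therefore $\supp A$ is minimal with respect to inclusion, and the Li characterization immediately yields that $A$ is extremal. No genuine obstacle arises; the only subtlety is remembering to treat the edge case $\min(n,m) = 1$ separately, since \thmref{thmA5.1} is stated under the assumption that neither of $n, m$ divides the other.
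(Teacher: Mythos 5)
Your proposal is correct and follows essentially the same route as the paper, which derives \thmref{thmA5.2} from \thmref{thmA5.1} for the forward direction and from \thmref{thmA6.7} together with the inclusion-minimality characterization of extremality (\propref{propPR5.8}) for the converse. Your explicit treatment of the degenerate case $\min(n,m)=1$ is a small point of extra care that the paper glosses over, but it does not change the argument.
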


This result 
was obtained also in  \cite{Lou23} 
using a different approach.
In the special case where 
$m = kn+1$ the result was proved
in \cite[Proposition 6]{Car96}.

If one of $n,m$ divides the other,
say, $m=kn$, then there is only one equivalence class of 
extremal   arrays 
and the support size of every extremal array
is equal to  $kn$. Hence  in this case
an array $A \in \S_{n,kn}$  is 
extremal if and only if
$|\supp A| = kn$.

However, for  all the remaining values of $n,m$,
that is, if $n,m$ are neither coprime nor
any one of them divides the other,
there is no characterization of
extremality by the size of the support.
One can show that the  only sufficient condition
for the extremality of   $A \in \Snm$
is $|\supp A| = n + m - \gcd(n,m)$,
but this condition is not necessary.

\subsection{}
A more general problem is to determine not only
the possible size, but also a structural
characterization, of the support of an
extremal $n \times m$ doubly stochastic array.
That is, to obtain an effective necessary and sufficient
condition for a subset
$\Om \sbt \{1, \dots, n\} \times \{1, \dots, m\}$
to be  the support of  some  
extremal $n \times m$   array.

We note that if an extremal $n \times m$ array with a given support
does exist, then it is unique, see \cite[Theorem 1]{Li96}.

In this paper we solve this problem
in the special case where $m = kn+1$.

\begin{thm}
\label{thmF1.1}
Let  $m = k n+1$. Then a subset
 $\Om \sbt \{1, \dots, n\} \times \{1, \dots, m\}$
is the support of an extremal
array $A \in \Snm$ if and only if the following two
conditions hold:
\begin{enumerate-roman}
\item
\label{itF1.2.1}
For each $1 \leq i \leq n$, the set 
$\{j : (i,j) \in \Om\}$ contains 
exactly $k+1$ elements;
\item
\label{itF1.2.2}
 $\Om$ does not contain any ``cycle'', that is,
any sequence of the form
\begin{equation}
\label{eqF1.2}
(i_1, j_1), \; (i_2, j_1), \; 
(i_2, j_2), \; (i_3, j_2), \; 
\dots, \;
(i_s, j_s), \; (i_1, j_s)
\qquad (s \geq 2)
\end{equation}
where $i_1, \dots, i_s$ are distinct row indices
and $j_1, \dots, j_s$  are distinct column indices.
\end{enumerate-roman}
\end{thm}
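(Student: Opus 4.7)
The plan is to reduce everything to \thmref{thmA5.2}. Since $m = kn+1$ we have $\gcd(n,m) = \gcd(n,1) = 1$, so by \thmref{thmA5.2} an array $A \in \Snm$ is extremal if and only if $|\supp A| = n + m - 1 = n(k+1)$. The theorem therefore reduces to the claim that, among subsets $\Om \sbt \{1,\dots,n\} \times \{1,\dots,m\}$ of size $n+m-1$, those that occur as the support of some doubly stochastic array are exactly the ones satisfying (i) and (ii).

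For the necessity of (i), suppose $A$ is extremal with $\supp A = \Om$, so $|\Om| = n(k+1)$. Each entry satisfies $a_{ij} \le n$ by the column constraint, and since row $i$ has row sum $m = kn+1$ it must contain at least $k+1$ nonzero entries. Summing these counts over all rows gives $|\Om| \ge n(k+1)$, and equality forces every row to contain exactly $k+1$ entries. For the necessity of (ii), if $\Om$ contained a cycle as in \eqref{eqF1.2}, the standard cycle-swap argument---altering the $2s$ entries along the cycle by $+\eps,-\eps,+\eps,\dots$ alternately---produces two distinct elements of $\Snm$ whose average is $A$, contradicting extremality.

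For sufficiency, suppose $\Om$ satisfies (i) and (ii), and view it as a bipartite graph on the row and column sets. By (ii) this graph is a forest, and by (i) it has $n+m-1$ edges. An isolated column would force these $n+m-1$ edges to live on at most $n+m-1$ vertices, contradicting the forest property, so every row and column appears and the identity \emph{vertices} $=$ \emph{edges} $+$ \emph{components} forces $\Om$ to be a spanning tree of the complete bipartite graph $K_{n,m}$. I then construct $A$ via a flow formula: for each $(i,j) \in \Om$, cutting the edge splits the tree into components $T_1 \ni i$ and $T_2 \ni j$, and I set $a_{ij} := m\, r_{T_1} - n\, c_{T_1}$, where $r_{T_1}, c_{T_1}$ count the rows and columns in $T_1$. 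The central structural identity, obtained by equating the sum of row degrees in $T_1$ to the number of edges in $T_1$, is $c_{T_1} = k\, r_{T_1}$ (row $i$ loses one edge by the cut, while every other row of $T_1$ keeps all $k+1$ of its edges in $T_1$). Substituting gives $a_{ij} = r_{T_1}$, an integer in $\{1,\dots,n\}$, so the entries are strictly positive on $\Om$ and $\supp A = \Om$. The row sum at $i$ is verified by deleting the row vertex $i$ and noting that each of the $k+1$ resulting subtrees satisfies $c = kr+1$, making the corresponding edge value $n-r$, with total sum $(k+1)n - (n-1) = m$. The column sum at $j$ is verified by deleting the column vertex $j$ and noting that each resulting subtree satisfies $c = kr$, making the corresponding edge value $r$, with total equal to the total number of rows, $n$. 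Finally \thmref{thmA5.2} yields that $A$ is extremal.

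The main obstacle is the structural bookkeeping that underlies condition (i): one must propagate the hypothesis that every row has degree $k+1$ through each tree surgery and read off the \emph{exact} column-to-row ratio in every resulting subtree ($c = kr$ after cutting an edge or a column vertex, $c = kr+1$ after deleting a row vertex). Once these identities are pinned down, positivity of the entries and the correct row and column sums follow automatically; no deeper ingredient is required.
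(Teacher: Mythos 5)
Your proposal is correct and follows essentially the same route as the paper: necessity of (i) via the bound $a_{ij}\le n$ combined with \thmref{thmA5.2}, necessity of (ii) via acyclicity of the associated graph, and sufficiency by observing that $\Om$ is a spanning tree of $K_{n,m}$ and weighting each edge by the number of row vertices in the component containing its row endpoint after the cut (your $a_{ij}=m\,r_{T_1}-n\,c_{T_1}$ collapses to exactly the paper's $w(e)=|U_e|$ via the same identity $c_{T_1}=k\,r_{T_1}$). The only cosmetic differences are that you re-derive the cycle-swap criterion rather than citing \propref{propPR5.4}, and you conclude extremality from the support size rather than from acyclicity.
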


The necessity of the  conditions
\ref{itF1.2.1}, \ref{itF1.2.2}
 is  known, see
\cite[Propositions 2, 6]{Car96}.
The novel part of the result is  that 
these two conditions are also sufficient.

For example, it is easy to check that the subset
\begin{equation}
\label{eqDF1.3}
\Om = 
\begin{bmatrix}
\bullet & 0 & 0 & \bullet\\
\bullet & \bullet & 0 & 0\\
0 & \bullet & \bullet & 0
\end{bmatrix}
\end{equation}
of the $3 \times 4$ array,
whose elements are indicated by `$\bullet$',
satisfies the conditions 
\ref{itF1.2.1} and \ref{itF1.2.2}
in \thmref{thmF1.1} (with $n=3$ and $k=1$).
Hence $\Om$ must be the support of some
extremal array $A \in \S_{3,4}$.
Indeed, $A$ is  the matrix 
on the right hand side of \eqref{eqDS1.3}.

As an application of \thmref{thmF1.1} we
find the total number of extremal  
$n \times (n+1)$ doubly stochastic arrays:

\begin{thm}
\label{thmE12.1}
The total number of extremal arrays in $\S_{n,n+1}$
is exactly  $n!  (n+1)^{n-1}$. 
\end{thm}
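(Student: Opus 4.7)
The plan is to reduce the count directly to a constrained spanning-tree count on a complete bipartite graph, and evaluate it via the bipartite Prüfer correspondence. Since $m = n+1$ corresponds to $k=1$ in \thmref{thmF1.1}, the extremal arrays in $\S_{n,n+1}$ are in bijection with subsets $\Om \sbt \{1,\dots,n\}\times\{1,\dots,n+1\}$ such that every row of $\Om$ contains exactly two elements and $\Om$ contains no cycle in the sense of \eqref{eqF1.2}. So the counting problem is reduced to enumerating such $\Om$.

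Next, I would encode each such $\Om$ as a bipartite graph $G_\Om$ on vertex classes $R = \{1,\dots,n\}$ (rows) and $C = \{1,\dots,n+1\}$ (columns), with edge set $\Om$. Condition \ref{itF1.2.1} forces every $r \in R$ to have degree $2$ in $G_\Om$, hence $|E(G_\Om)| = 2n$. Condition \ref{itF1.2.2} is precisely the statement that $G_\Om$ is acyclic, since the sequence in \eqref{eqF1.2} corresponds to the bipartite cycle $i_1 - j_1 - i_2 - j_2 - \dots - i_s - j_s - i_1$. Thus $G_\Om$ is a forest with $2n$ edges on $|R| + |C| = 2n+1$ vertices, and a forest with exactly one less edge than vertices is a tree. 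So $G_\Om$ is a spanning tree of $K_{n,n+1}$ in which every vertex of $R$ has degree $2$. Conversely, any such spanning tree arises from a unique $\Om$ satisfying \ref{itF1.2.1} and \ref{itF1.2.2}.

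It remains to count spanning trees $T$ of $K_{n,n+1}$ in which every row vertex has degree $2$. For this I would invoke the standard bipartite Prüfer bijection, which identifies spanning trees of $K_{n,n+1}$ with pairs $(a,b)$, where $a \in R^{n}$ has length $|C|-1 = n$ and $b \in C^{n-1}$ has length $|R|-1 = n-1$, with the property that for every $r \in R$ the multiplicity of $r$ in $a$ equals $\deg_T(r) - 1$ (and analogously for $C$ and $b$). The constraint $\deg_T(r) = 2$ for every $r \in R$ thus forces $a$ to be a permutation of $\{1,\dots,n\}$, giving $n!$ choices, while $b$ is unrestricted, giving $(n+1)^{n-1}$ choices. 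Multiplying yields $n!(n+1)^{n-1}$, which is the claimed total.

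The substantive content of the theorem is already carried by \thmref{thmF1.1}; once that is available, the only obstacle is a careful verification that the bipartite Prüfer code respects the row-degree constraint, which is routine.
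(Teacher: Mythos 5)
Your argument is correct, and it reaches the count by a genuinely different route than the paper. The paper first establishes (\thmref{thmE13}) a bijection between extremal arrays in $\S_{n,n+1}$ and trees on $n+1$ labeled vertices whose edges are also labeled, and then counts these as $n!\,(n+1)^{n-1}$ by Cayley's formula times the $n!$ edge labelings. You instead stay with the bipartite incidence structure: using \thmref{thmF1.1} with $k=1$ together with the uniqueness of an extremal array with a given support (\propref{propPR5.71}), you identify the extremal arrays with spanning trees of $K_{n,n+1}$ in which every row vertex has degree $2$, and count those via the degree-refined bipartite Pr\"ufer code. The two combinatorial objects are equivalent --- the paper's proof of \thmref{thmF1} observes that the bipartite graph $G(A)$ is exactly the subdivision of the edge-labeled tree $T$, with each degree-$2$ row vertex sitting in the middle of an edge --- so the difference lies entirely in the enumeration step: the paper contracts the row vertices and quotes Cayley, while you keep them and quote the bipartite Pr\"ufer correspondence with its degree property (each $r\in R$ occurs $\deg_T(r)-1$ times in the $R$-sequence of length $|C|-1$). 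Both external inputs are classical; yours is marginally less standard and, as you acknowledge, requires the routine check that the bipartite code carries the degree information, which is the analogue of what Cayley-plus-edge-labeling gives the paper for free. Your intermediate reductions (each row vertex has degree $2$, hence $2n$ edges on $2n+1$ vertices, hence acyclicity forces a single component and a spanning tree) are all correct, and the final multiplication $n!\cdot(n+1)^{n-1}$ checks out.
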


\subsubsection*{Note added in proof}
We later learned that \thmref{thmE12.1} is a special case of a more
general result \cite[Theorems 14 and 15]{KW67}
which gives the total number of 
extremal  $n \times (kn \pm 1)$ doubly stochastic arrays.

\subsection{}
Loukaki posed in \cite[Remark 2]{Lou23} the following question: 
Do there exist two extremal 
$n \times m$ doubly stochastic arrays $A,B$
whose supports are of
the least possible  size $n+m - \gcd(n,m)$,  and
such that $A,B$ have the same set of entries (counted
with multiplicities) but $A$ and $B$ are
not equivalent?

We will prove that the answer is affirmative
for $n \times (n+1)$ arrays:

\begin{thm}
\label{thmE11.3}
For any $n \geq 6$
there exist two extremal arrays
 $A,B \in \S_{n,n+1}$ that
have the same set of entries counted
with multiplicities, but $A$ and $B$ are
not equivalent.
\end{thm}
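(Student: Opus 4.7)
The plan is to reduce the theorem, via \thmref{thmF1.1}, to constructing two non-isomorphic trees on $n+1$ vertices whose edges yield the same multiset of ``split sizes''. By \thmref{thmF1.1}, the support of any extremal $A \in \S_{n,n+1}$ has exactly $2$ entries per row and is acyclic, so, having $2n+1$ vertices and $2n$ edges, it is in fact a tree. Contracting each row-vertex (of degree $2$) together with its two incident edges yields a tree $T$ on the $n+1$ column-vertices, and conversely every tree on $n+1$ vertices arises this way by subdividing its edges. Two extremal arrays are equivalent if and only if the associated trees are isomorphic as abstract graphs. Moreover, one checks directly that the extremal array corresponding to $T$ has entry $|T_{c'}^{e}|$ in the position (row $r_e$, column $c$), where $e = \{c,c'\} \in E(T)$ and $T_{c'}^e$ denotes the component of $T \setminus \{e\}$ containing $c'$: the row $r_e$ sums to $|V(T)| = n+1$, and column $c$ sums to $|V(T)|-1 = n$, since each $w \neq c$ belongs to exactly one subtree $T_{c'}^{e}$ as $e$ ranges over the edges at $c$. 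Thus the multiset of nonzero entries of the extremal array associated with $T$ equals $\bigcup_{e\in E(T)}\{k_e,\,n+1-k_e\}$, where $k_e$ and $n+1-k_e$ are the sizes of the two components of $T\setminus\{e\}$, and the problem reduces to finding non-isomorphic trees with identical split multisets.

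The proposed pair is as follows. Let $T_1$ be the spider with four legs of lengths $1,1,1,n-3$, and let $T_2$ be the tree obtained from a path $v_1 v_2 \cdots v_{n-3}$ by attaching one pendant to $v_1$, one pendant to $v_2$, and two pendants to $v_{n-3}$. Both have $n+1$ vertices. For $n \geq 6$, $T_1$ has a vertex of degree $4$ (its center), while every vertex of $T_2$ has degree at most $3$, so the two trees are non-isomorphic.

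To check that their split multisets coincide: for $T_1$, the three short pendant legs give $\{1,n\}$ three times, and the $n-3$ edges of the long leg give $\{k,\,n+1-k\}$ for $k=4,5,\ldots,n$. For $T_2$, the four pendants give $\{1,n\}$ four times, and each spine edge $v_i v_{i+1}$ gives $\{s_i,\,n+1-s_i\}$, where $s_i = i + d_1 + \cdots + d_i$ and $d_j$ is the number of pendants at $v_j$; one computes $s_1=2$ and $s_i=i+2$ for $2 \leq i \leq n-4$, so the spine contributes $\{2,n-1\}$ together with $\{k,\,n+1-k\}$ for $k=4,5,\ldots,n-2$. The spine pair of $T_1$ at $k=n-1$ equals $\{2,n-1\}$, matching the extra spine pair of $T_2$; the spine pair of $T_1$ at $k=n$ equals $\{1,n\}$, matching the extra pendant of $T_2$; and the pairs $\{k,\,n+1-k\}$ for $4 \leq k \leq n-2$ appear identically in both trees. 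Hence the two multisets agree. Since a given support admits at most one extremal array, the corresponding $A,B \in \S_{n,n+1}$ have the same multiset of nonzero entries; as both have exactly $n^2-n$ zero entries, their full multisets of entries coincide. Since the associated trees are non-isomorphic, $A$ and $B$ are not equivalent.

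The main challenge is locating the pair $T_1,T_2$: generic non-isomorphic trees have differing split multisets, and the matching here relies on a specific cancellation, whereby the two outermost edges of the spider's long leg play the role of $T_2$'s extra spine pair and extra pendant. Once this pair is in hand, the verification is a routine tally of the split multiset.
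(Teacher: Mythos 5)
Your proof is correct and follows essentially the same route as the paper: both arguments reduce the problem, via the tree correspondence of Theorems \ref{thmE13} and \ref{thmF1}, to exhibiting two non-isomorphic trees on $n+1$ vertices with identical multisets of edge-split sizes, with non-isomorphism witnessed by the presence or absence of a degree-$4$ vertex. The only difference is cosmetic --- you give a closed-form pair (a spider with legs $1,1,1,n-3$ and a caterpillar) valid for all $n \geq 6$, whereas the paper constructs its pair inductively from an explicit base case at $n=6$ --- and your tally of the split multisets checks out.
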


In fact  the two arrays $A,B$
in this result are not only
extremal but moreover 
have  supports of the least 
possible size, since 
 $n$ and $n+1$ are coprime
(\thmref{thmA5.2}).

We note that the result 
in \thmref{thmE11.3}  does not hold
for $n \leq 5$.

\subsection{}
The rest of the paper  is organized as follows. 
In \secref{secP1} we give  some preliminary background.
In \secref{secD1} we prove  that
the minimal size of the
support of an
 $n \times m$ doubly stochastic
array is
 $n + m - \gcd(n,m)$
(\thmref{thmA6.7}).
In particular we present methods
for constructing doubly stochastic  
arrays of minimal
support size.

In \secref{secD2} we  
characterize the possible sizes of the supports
of all the extremal $n \times m$ doubly stochastic  arrays
(\thmref{thmA5.1}).
In \secref{secD4}
we characterize the support structure of the
extremal $n \times (kn+1)$ arrays  (\thmref{thmF1.1}).
We then use this result in order to
determine the total number of extremal $n \times (n+1)$
  arrays (\thmref{thmE12.1}).

In \secref{secD3} 
we construct examples
of non-equivalent extremal arrays with the same 
set of entries
counted with multiplicities 
 (\thmref{thmE11.3}).

In the last \secref{secR1} we 
 give some motivational remarks, 
relating the notion of
doubly stochastic arrays
to some other mathematical topics.

% =======================================

\section{Preliminaries}
\label{secP1}

In this section we provide 
  some preliminary background
on  doubly stochastic arrays.
First we discuss the notion of the 
graph associated to an $n \times m$ matrix,
 and then state several basic results on doubly stochastic arrays
that will be used later on.

\subsection{The associated graph of a matrix}
To every $n \times m$ complex-valued
matrix  $A = (a_{ij})$  one can associate a 
weighted (undirected)
bipartite graph $G(A) = (U,V,E,w)$
 defined as follows.
The set of vertices of the graph is the union of two disjoint sets
$U = \{u_1, \dots, u_n\}$ and
$V = \{v_1, \dots, v_m\}$ (corresponding to rows
and columns of $A$ respectively)
which form a bipartition of the graph.
The set of edges $E$ includes the edge 
 $\{u_i, v_j\}$ joining $u_i$ and $v_j$
if and only if the matrix entry $a_{ij}$ is nonzero.
Finally, the graph is endowed with 
a weight function $w: E \to \C$ which assigns the 
(nonzero) weight $a_{ij}$ to the edge $\{u_i, v_j\}$.

For example, \figref{fig:graphexamples} shows
the graphs associated to the
 two matrices in \eqref{eqDS1.3}.

If $x$ is any one of the  vertices of the graph $G(A)$,
then the  sum of the weights of all the edges incident to $x$
will be called the \emph{weight of the vertex $x$}.

We observe that an $n \times m$ matrix $A$ is doubly 
stochastic if and only if the graph $G(A)$ has
the  following three
properties: (i) the edges of the graph
have all positive weights;
(ii)  every vertex from $U$ has weight  $m$;
and (iii) every vertex from $V$ has weight  $n$.

% -------------------------------------

\begin{figure}[ht]
\centering

\begin{subfigure}[t]{0.475\textwidth}
        \centering

\begin{tikzpicture}[scale=0.32, style=mystyle]

\draw[myroundrect] (-7,-4) rectangle  (15,16);

\coordinate (v4) at (8,0);
\coordinate (u3) at (0,2);
\coordinate (v3) at (8,4);
\coordinate (u2) at (0,6);
\coordinate (v2) at (8,8);
\coordinate (u1) at (0,10);
\coordinate (v1) at (8,12);

\draw[myedge] (u1) edge node[edgelabel,above,pos=0.65]{$3$}  (v1);
\draw[myedge] (u2) edge node[edgelabel,above,pos=0.65]{$3$}  (v2);
\draw[myedge] (u3) edge node[edgelabel,above,pos=0.75]{$3$}  (v3);
\draw[myedge] (v4) edge node[edgelabel,right,pos=0.85]{$1$}  (u1);
\draw[myedge] (v4) edge node[edgelabel,above,pos=0.65]{$1$}  (u2);
\draw[myedge] (v4) edge node[edgelabel,below]{$1$}  (u3);

\foreach \j in {1,...,4} { \draw[vnode] (v\j) node[vnodelabel]{$v_\j$} circle (\mycirc); }
\foreach \j in {1,...,3} { \draw[unode] (u\j) node[unodelabel]{$u_\j$} circle (\mycirc); }

\end{tikzpicture}
    \end{subfigure}
~
\begin{subfigure}[t]{0.475\textwidth}
        \centering
\begin{tikzpicture}[scale=0.32, style=mystyle]

\draw[myroundrect] (-7,-4) rectangle  (15,16);

\coordinate (v4) at (8,0);
\coordinate (u3) at (0,2);
\coordinate (v3) at (8,4);
\coordinate (u2) at (0,6);
\coordinate (v2) at (8,8);
\coordinate (u1) at (0,10);
\coordinate (v1) at (8,12);

\draw[myedge] (u1) edge node[edgelabel,above,pos=0.5]{$1$}  (v1);
\draw[myedge] (u2) edge node[edgelabel,above,pos=0.65]{$2$}  (v2);
\draw[myedge] (u3) edge node[edgelabel,below,pos=0.35]{$3$}  (v3);
\draw[myedge] (v4) edge node[edgelabel,left,pos=0.15]{$3$}  (u1);
\draw[myedge] (v1) edge node[edgelabel,above,pos=0.55]{$2$}  (u2);
\draw[myedge] (v2) edge node[edgelabel,above,pos=0.75]{$1$}  (u3);

\foreach \j in {1,...,4} { \draw[vnode] (v\j) node[vnodelabel]{$v_\j$} circle (\mycirc); }
\foreach \j in {1,...,3} { \draw[unode] (u\j) node[unodelabel]{$u_\j$} circle (\mycirc); }

\end{tikzpicture}
    \end{subfigure}

\caption{The graphs associated to the two matrices in \eqref{eqDS1.3}.}
\label{fig:graphexamples}
\end{figure}

% -------------------------------------

\subsection{Basic results on extremal arrays}

We collect some basic 
results on 
extremal  $n \times m$ doubly 
stochastic  arrays
 that will be used in the paper.

\begin{prop}[{see \cite[Theorem 1]{Li96}}]
\label{propPR5.71}
An array  $A \in \Snm$ is extremal if and
only if the two conditions 
$B \in \Snm$, $\supp B = \supp A$
imply that 
$B=A$.
\end{prop}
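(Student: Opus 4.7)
The plan is to prove both implications by perturbing $A$ along the direction $B-A$ (or $X-A$) and exploiting the fact that the two constraints defining $\Snm$ — row/column sum equations and nonnegativity — behave well under such perturbations precisely because of the support hypothesis.

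For the forward direction, I would argue by contrapositive. Suppose there exists $B \in \Snm$ with $\supp B = \supp A$ and $B \neq A$. Set $D := B - A$, so $D \neq 0$. The key observation is that $D_{ij} = 0$ whenever $(i,j) \notin \supp A$, because both $A$ and $B$ vanish there. Hence for every sufficiently small $\eps > 0$, the matrix $A \pm \eps D$ has nonnegative entries: entries outside $\supp A$ remain zero, while at positions $(i,j) \in \supp A$ one has $a_{ij} > 0$, so small perturbations keep the entry positive. Moreover $A \pm \eps D$ satisfies the row and column sum conditions \eqref{eqDS1.1}--\eqref{eqDS1.2}, since both $A$ and $B$ do and these conditions are affine. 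Thus $A \pm \eps D \in \Snm$, both are distinct from $A$, and $A = \tfrac{1}{2}\bigl((A+\eps D) + (A-\eps D)\bigr)$, showing $A$ is not extremal.

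For the reverse direction, assume $A$ is not extremal, so there exist $X, Y \in \Snm$ with $X \neq Y$ and $A = \tfrac{1}{2}(X+Y)$. Since $A, X, Y$ are all nonnegative and $X+Y = 2A$, wherever $a_{ij} = 0$ one must have $x_{ij} = y_{ij} = 0$; that is, $\supp X, \supp Y \subseteq \supp A$. Setting $D := X - A \neq 0$, the same argument as above shows $D$ is supported in $\supp A$, and for sufficiently small $\eps > 0$ the matrix $B_\eps := A + \eps D$ lies in $\Snm$ and satisfies $\supp B_\eps = \supp A$ (we can choose $\eps$ small enough so that no positive entry of $A$ is driven to zero by the perturbation). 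Yet $B_\eps \neq A$ because $D \neq 0$. This produces a matrix $B \in \Snm$ with $\supp B = \supp A$ and $B \neq A$, contradicting the hypothesis.

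There is no real obstacle here: the only delicate point is ensuring that the perturbation both preserves nonnegativity and (in the reverse direction) preserves the support exactly, which is handled by the dual observation that $B - A$ (respectively $X - A$) is supported in $\supp A$ and that $A$ is strictly positive on $\supp A$, so a small enough perturbation leaves every positive entry positive. Both choices of $\eps$ are controlled by $\min_{(i,j) \in \supp A} a_{ij} / \max_{(i,j)} |D_{ij}|$.
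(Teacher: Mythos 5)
Your proof is correct. Note that the paper itself does not prove this proposition --- it is stated with a citation to Li, Mikusi\'nski, Sherwood and Taylor \cite{Li96}, so there is no in-paper argument to compare against; your two-sided perturbation argument (using that $B-A$, respectively $X-A$, is supported in $\supp A$, has vanishing row and column sums, and that $A$ is strictly positive on its support, so $A \pm \eps D$ stays in $\Snm$ for small $\eps$) is the standard and complete way to establish it. The one point worth making explicit is the reduction of ``$A$ is a convex combination $\lambda X + (1-\lambda)Y$ with $X,Y \neq A$'' to the midpoint form $A = \tfrac12(X'+Y')$, but since $Y - A$ is a negative multiple of $X - A$ this is immediate, and your reverse-direction step correctly handles the possibility that $\supp X$ is a proper subset of $\supp A$ by perturbing $A$ toward $X$ rather than using $X$ itself.
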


\begin{prop}[{see \cite[Corollary 2]{Li96}}]
\label{propPR5.8}
An array  $A \in \Snm$ is extremal if and
only if 
there is no array $B \in \Snm$ such that $\supp B$
is a proper subset of $\supp A$.
\end{prop}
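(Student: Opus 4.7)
The plan is to exploit the fact that the affine row and column sum constraints defining $\Snm$ are preserved under adding any matrix $M$ with vanishing row and column sums, so extremality of $A$ is controlled purely by the nonnegativity inequalities. Concretely, in each direction I will produce a suitable perturbation $M$ supported in $\supp A$ and follow the segment $A + tM$ either until it exhibits $A$ as a midpoint, or until it hits a new zero entry.

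For the forward implication, I would argue by contradiction. Suppose some $B \in \Snm$ has $\supp B \subsetneq \supp A$, and set $M := A - B$, which has zero row and column sums. Consider $C_t := A + tM = (1+t)A - tB$ for small $t > 0$. Outside $\supp A$ both $A$ and $B$ vanish; on $\supp A \setminus \supp B$ one has $C_t = (1+t)a_{ij} > 0$; on $\supp B$ the entries of $C_t$ depend continuously on $t$ and are strictly positive at $t=0$. Hence $C_t \in \Snm$ for sufficiently small $t > 0$, and $C_t \neq A$ since $M \neq 0$. Rewriting
\[
A \;=\; \tfrac{1}{1+t}\, C_t + \tfrac{t}{1+t}\, B
\]
exhibits $A$ as a nontrivial convex combination of two distinct elements of $\Snm$, contradicting extremality of $A$.

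For the reverse implication, suppose $A$ is not extremal, so $A = \lambda B' + (1-\lambda) C'$ with $B', C' \in \Snm$ both distinct from $A$ and $\lambda \in (0,1)$. Put $M := B' - A$. Then $M \neq 0$ has zero row and column sums, and $\supp M \subseteq \supp A$, because $a_{ij} = 0$ together with nonnegativity of $B'$ and $C'$ forces $b'_{ij} = c'_{ij} = 0$. Since the rows of $M$ sum to $0$ and $M \neq 0$, at least one entry of $M$ is strictly negative. I would then define
\[
t_+ \;:=\; \min \bigl\{ -a_{ij}/m_{ij} : m_{ij} < 0 \bigr\} \;>\; 0,
\]
and set $B := A + t_+ M$. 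By construction $B$ has nonnegative entries, preserves all row and column sums, and acquires a new zero at whichever index $(i,j) \in \supp A$ attains the minimum above. Hence $B \in \Snm$ with $\supp B \subsetneq \supp A$, as required.

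The argument is essentially a textbook inspection of the facial structure of the polytope $\Snm$, so I anticipate no serious obstacle; the only points needing care are checking that the new zeros produced by the perturbation lie strictly inside $\supp A$ (so that the support inclusions are proper) and that the chosen perturbation direction $M$ is nonzero.
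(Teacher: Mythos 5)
Your proof is correct and complete. Note that the paper does not actually prove this proposition; it is quoted from \cite[Corollary 2]{Li96} and used as a black box, so there is no internal argument to compare against. Your two perturbation arguments are the standard ones for reading off the facial structure of the polytope $\Snm$: the forward direction correctly exhibits $A$ as a convex combination of $B$ and $C_t=(1+t)A-tB$ for small $t>0$ (both lie in $\Snm$ and differ from $A$), and the reverse direction correctly pushes along $M=B'-A$ until a positive entry of $A$ becomes zero, with the containment $\supp M\subseteq\supp A$ guaranteeing that the new support is a proper subset. The two points you flag as needing care (that $M\neq 0$ and that the new zero lies inside $\supp A$) are indeed the only delicate points, and both are handled correctly: $M\neq 0$ because $B'\neq A$, and the minimizing index satisfies $m_{ij}<0$, hence lies in $\supp M\subseteq\supp A$, so $a_{ij}>0$ there. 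This is a clean, self-contained replacement for the external citation.
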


\begin{prop}[{see \cite[Proposition 2]{Car96}}]
\label{propPR5.4}
An array  $A \in \Snm$ is extremal if and
only if the graph $G(A)$ contains no cycles.
\end{prop}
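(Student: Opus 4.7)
The plan is to prove the equivalence by establishing both directions separately, using the correspondence between cycles in $G(A)$ and certain perturbations that preserve the row and column sums.

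For the ``only if'' direction I would contrapose. Assume $G(A)$ contains a cycle corresponding to positions
\[
(i_1,j_1),\,(i_2,j_1),\,(i_2,j_2),\,(i_3,j_2),\,\dots,\,(i_s,j_s),\,(i_1,j_s),
\]
with all $i_t$ distinct and all $j_t$ distinct. Define an $n\times m$ matrix $C$ by placing $+1$ at the positions $(i_t,j_t)$, $-1$ at the positions $(i_{t+1},j_t)$ (indices mod $s$), and $0$ elsewhere. Each row index $i_t$ appearing in the cycle sees exactly one $+1$ and one $-1$, and likewise for each column index $j_t$, so every row sum and every column sum of $C$ vanishes. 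Since $\supp C \sbt \supp A$ and the nonzero entries of $A$ are strictly positive, for sufficiently small $\eps>0$ the matrices $A\pm \eps C$ have nonnegative entries, hence lie in $\Snm$. From $A=\tfrac12\bigl((A+\eps C)+(A-\eps C)\bigr)$ and $C\neq 0$ we conclude that $A$ is not extremal.

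For the ``if'' direction, suppose $A$ is not extremal, so $A=\tfrac12(B+C)$ with $B,C\in\Snm$ and $B\neq C$. Set $D:=B-C\neq 0$. Since $B$ and $C$ share the row and column sums prescribed by \eqref{eqDS1.1}--\eqref{eqDS1.2}, every row and every column of $D$ sums to zero. Moreover $\supp D \sbt \supp A$: indeed, where $a_{ij}=0$ we have $b_{ij}=c_{ij}=0$ because nonnegative entries with zero average must both vanish. Let $H$ be the subgraph of $G(A)$ whose edges correspond to $\supp D$. A non-isolated row vertex $u_i$ of $H$ must have degree at least two, because a single nonzero entry in row $i$ of $D$ could not give a zero row sum; the same holds for column vertices. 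A bipartite graph in which every non-isolated vertex has degree $\geq 2$ contains a cycle, and pulling it back through $H\sbt G(A)$ yields a cycle in $G(A)$.

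The technical point to handle carefully is extracting a genuine cycle (with distinct $i_t$'s and distinct $j_t$'s) from the ``degree $\geq 2$'' condition on $H$. The standard argument is to start at any vertex, walk along edges of $H$ always using a previously unused edge (which is possible as long as we do not revisit a vertex, by the degree condition), and stop at the first repeated vertex $x$; the closed walk from the first visit of $x$ to the second visit of $x$ has no internal repetitions and, by bipartiteness, alternates between row and column vertices, so it has the desired form \eqref{eqF1.2}. This is the only step requiring any combinatorial care; the rest is a direct perturbation argument together with nonnegativity bookkeeping.
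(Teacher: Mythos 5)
The paper does not prove this proposition; it is quoted from \cite[Proposition 2]{Car96} and used as a known fact, so there is no internal proof to compare against. Your argument is the standard one and is correct: the alternating $\pm\eps$ perturbation along a cycle shows non-extremality, and conversely the difference $D=B-C$ of two distinct representatives has zero row and column sums and support inside $\supp A$, forcing every non-isolated vertex of the corresponding subgraph to have degree at least two and hence producing a cycle (the same device the paper itself uses in the proof of Proposition~\ref{propPR5.2}). The only point worth making explicit is that the definition of extremality allows an arbitrary convex combination $A=\lambda B+(1-\lambda)C$, not just the midpoint; this is harmless, both because the zero-average argument for $\supp D\sbt\supp A$ works for any $\lambda\in(0,1)$ and because a point of a convex set fails to be extreme if and only if it is the midpoint of two distinct points of the set.
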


\begin{prop}
\label{propPR5.2}
Let $A \in \Snm$ be an extremal array.
Then $A$ is an integer-valued matrix. Moreover,
 all the entries of $A$ are
integral multiples of $\gcd(n,m)$.
\end{prop}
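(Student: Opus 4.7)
The plan is to exploit the graph-theoretic characterization of extremality provided by \propref{propPR5.4}: since $A$ is extremal, its associated bipartite graph $G(A)$ contains no cycles and is therefore a forest. The nonzero entries of $A$ are precisely the edge weights of $G(A)$, so it suffices to show that on each connected component (tree) $T$ of $G(A)$ — in which every row-vertex has prescribed weight $m$ and every column-vertex has prescribed weight $n$ — each edge weight is an integer linear combination of $n$ and $m$, for then it is divisible by $\gcd(n,m)$.

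I would prove this by a leaf-stripping argument. Assign to every vertex $x$ of $T$ a ``demand'' $d(x)$ equal to its prescribed weight, so that $d(x) \in \{n, m\}$ initially. Iteratively select a leaf $x$ of the current tree, with unique incident edge $e = \{x, y\}$; then necessarily $w(e) = d(x)$. Remove $x$ and $e$ from the tree and replace $d(y)$ by $d(y) - d(x)$. Throughout this process the following invariant is preserved: every current $d$-value is an integer linear combination of $n$ and $m$. Consequently each edge weight $w(e)$ revealed at each step is also an integer linear combination of $n$ and $m$, hence a multiple of $\gcd(n,m)$.

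The consistency of the procedure — namely, that the very last vertex of $T$ ends with residual demand zero — is automatic from the identity $p m = q n$, where $p$ and $q$ denote the numbers of row- and column-vertices of $T$ (both sides express the total sum of edge weights inside $T$); but it also follows tautologically from the fact that the actual entries of $A$ furnish an honest solution to the system. The only real bookkeeping point, which I expect to be the main subtlety in a formal write-up, is verifying that the integer-combination invariant is preserved at each leaf-stripping step — but this is immediate from the update rule $d(y) \mapsto d(y) - d(x)$. Since the entries $a_{ij}$ are nonnegative by hypothesis, one concludes that each $a_{ij}$ is a nonnegative integer multiple of $\gcd(n,m)$, and in particular $A$ is an integer matrix.
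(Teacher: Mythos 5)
Your proof is correct, but it follows a genuinely different route from the paper's. The paper also begins from \propref{propPR5.4} (extremality means $G(A)$ has no cycles), but instead of stripping leaves it looks at the subgraph $H$ of $G(A)$ induced by all edges whose weight is \emph{not} a multiple of $d=\gcd(n,m)$: since the total weight at every vertex is $n$ or $m$, hence a multiple of $d$, a vertex incident to one edge of $H$ must be incident to at least one more, so every vertex of $H$ has degree at least two; a nonempty graph of minimum degree two contains a cycle, contradicting extremality, so $H$ is empty and every edge weight lies in $\Z d$. Your argument uses acyclicity in the complementary form ``every finite tree with an edge has a leaf'' and runs a direct induction, exhibiting each weight as an integer linear combination of $n$ and $m$ (equivalent to divisibility by $d$, via B\'ezout in one direction). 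Both are sound; the paper's version is a one-shot contradiction that needs no bookkeeping of demands or of the order of removal, while yours is constructive --- it actually computes the entries from the tree structure (in the spirit of what the paper later does in the proof of Theorem~\ref{thmF1.1}) --- at the price of verifying the two invariants you flag, which you do correctly. The only cosmetic point: the entries indexed by $\supp A$ are positive, not merely nonnegative, so your conclusion for those entries is that they are positive multiples of $d$; the zero entries are trivially multiples of $d$ as well.
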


\begin{proof}
Let $d = \gcd(n,m)$. Since $A$ is doubly stochastic,
then for every vertex $x$ of the graph $G(A)$ the
sum of the weights of all the edges incident to $x$
is in $\Z d$  (since this sum is either $n$ or $m$).
Hence if a vertex $x$ is incident to some edge $e$
of weight not in $\Z d$, then $x$
must be incident to at least one other edge $e'$
also of weight not in $\Z d$.
This implies that if $H$ is the subgraph of $G(A)$ induced by
all the edges with weights not in 
$\Z d$, then 
every vertex of $H$ must have degree at least two.
It follows that if $H$ is nonempty, then $H$ must contain
a cycle, which is not possible due to the
extremality of the matrix $A$ (\propref{propPR5.4}).
Thus $H$ has to be empty,
that is, the weight of
every edge of $G(A)$ is in $\Z d$, and
so all the
entries of $A$ are
integral multiple of $d$.
\end{proof}

\begin{prop}[{see \cite[Proposition 4]{Car96}}]
\label{propPR5.7}
If $m=kn$, then any extremal
  $n \times m$ doubly stochastic array 
is equivalent to the block diagonal matrix 
\setcounter{MaxMatrixCols}{50}
\begin{equation}
\label{eqPR5.7.1}
\begin{bmatrix}
n & n & \cdots  & n   \\
& & & & n & n & \cdots  & n \\
& & & &  &  &   &  & \ddots \\
& & & &  &  &   &  & &  n & n & \cdots  & n 
\end{bmatrix}
\end{equation}
where in each row there are exactly $k$ nonzero entries.
\end{prop}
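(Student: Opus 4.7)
The plan is to reduce everything to Proposition~\ref{propPR5.2}. Since $m=kn$, we have $\gcd(n,m)=n$, so that proposition tells us every entry of the extremal array $A$ is a nonnegative integer multiple of $n$.

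Next I would use the column sum condition \eqref{eqDS1.1}. Each column of $A$ sums to $n$, and its entries lie in the set $\{0, n, 2n, 3n, \dots\}$. The only way a sum of such numbers can equal $n$ is if exactly one entry in the column equals $n$ and the rest are $0$. Consequently, the matrix $\tfrac{1}{n} A$ is a $\{0,1\}$-matrix with exactly one $1$ in each of its $m=kn$ columns. Now invoke the row sum condition \eqref{eqDS1.2}: each row of $A$ sums to $m=kn$, hence each row of $\tfrac{1}{n} A$ contains exactly $k$ ones.

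Finally, to produce the block-diagonal form, I would permute the columns of $A$. For each $i \in \{1, \dots, n\}$, let $C_i$ denote the set of $k$ column indices $j$ for which $a_{ij}=n$. The sets $C_1, \dots, C_n$ partition $\{1,\dots,m\}$, so we may rearrange the columns of $A$ by placing the columns in $C_1$ first, then those in $C_2$, and so on up to $C_n$. The resulting equivalent matrix has its nonzero entries concentrated in consecutive blocks of $k$ columns in each successive row, which is precisely the block diagonal pattern shown in \eqref{eqPR5.7.1}.

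There is no real obstacle here; the proof is essentially immediate once Proposition~\ref{propPR5.2} is available, because the divisibility $n \mid m$ forces the per-entry bound and the per-column sum to collide so tightly that only $0$'s and $n$'s can appear. The only point worth stating carefully is the permutation step at the end, which shows equivalence (rather than equality) to the canonical block form.
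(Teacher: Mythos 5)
Your proposal is correct and follows essentially the same route as the paper: both deduce from Proposition~\ref{propPR5.2} that all entries are multiples of $n$, use the column sums to force each nonzero entry to equal $n$ with exactly one per column, and use the row sums to get exactly $k$ per row. Your explicit description of the column permutation at the end is a slightly more careful finish than the paper's, but the argument is the same.
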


Notice that Birkhoff's theorem
is the case $k=1$ in \propref{propPR5.7}.

The following proof is different from the one in \cite{Car96}.

\begin{proof}[Proof of \propref{propPR5.7}]
By \propref{propPR5.2},
 all the entries of $A$ are
integral multiples of $n$.
Since the entries are nonnegative and
the sum at each column  is
$n$, there can be no entry greater than $n$.
Hence  every nonzero entry must be
equal to $n$. Since
the sum of the entries at each row is $kn$,
then in each row there are exactly $k$ nonzero entries.
Similarly,  the sum  of the entries  at each column  is $n$ 
and hence in each column there is exactly one nonzero entry.
Thus $A$ must be
 equivalent to the matrix in \eqref{eqPR5.7.1}.
\end{proof}

% =======================================

\section{Arrays with minimal support size}
\label{secD1}

In this section  our
 main goal is to prove \thmref{thmA6.7},
which states that the minimal size of the
support of an
 $n \times m$ doubly stochastic
array is
 $n + m - \gcd(n,m)$.

To prove this result we will first show that
the support  size
of any $n \times m$ doubly stochastic
array is bounded from below by
the  value $n + m - \gcd(n,m)$.

 Then we will turn to construct
examples of arrays that attain the
 minimal support size. First, we will 
show that the problem can be reduced to
the special case when $n,m$
are \emph{coprime}.
Then, we will present two different methods for 
constructing
$n \times m$   doubly stochastic  
arrays of minimal
support size.

\subsection{Lower bound for the support  size
of a doubly stochastic array}
We begin by showing that an $n \times m$ doubly stochastic 
array cannot have support of 
size smaller than
$ n + m - \gcd(n,m)$, which
establishes the lower bound 
in \thmref{thmA6.7}. In fact we will prove a 
 stronger version of this result,
 which establishes that the same
lower bound holds also for complex-valued 
matrices, that is, even without assuming
that the matrix entries are nonnegative.

\begin{thm}
\label{thmD1.1}
Let $A = (a_{ij})$ be an  $n \times m$ 
complex-valued matrix  satisfying \eqref{eqDS1.1}
and \eqref{eqDS1.2}, that is,
the sum of the entries at each row is $m$
and at each column  is $n$. Then
\begin{equation}
\label{eqD.1.1}
|\supp A| \geq  n + m - \gcd(n,m).
\end{equation}
\end{thm}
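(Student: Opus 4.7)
The plan is to translate the problem to the bipartite graph $G(A)=(U,V,E)$ associated to $A$, where $|U|=n$, $|V|=m$, and edges correspond to nonzero entries of $A$. Since $|\supp A|=|E|$, the goal becomes to bound $|E|$ from below. First I would note that because every row sum equals $m\neq 0$ and every column sum equals $n\neq 0$, no vertex of $G(A)$ can be isolated.

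Next I would decompose $G(A)$ into its connected components $C_1,\dots,C_t$ and let $p_i$, $q_i$ denote the number of vertices of $C_i$ lying in $U$ and $V$ respectively, so that $\sum_i p_i = n$ and $\sum_i q_i = m$. The structural input is that for each $i$ one has the identity $p_i m = q_i n$. Indeed, by definition of a connected component, any nonzero entry $a_{ij}$ of $A$ whose row $u_i$ lies in $C_i$ must have its column $v_j$ also in $C_i$ (otherwise the edge $\{u_i,v_j\}$ would merge two components). Hence summing the entries of $A$ indexed inside $C_i$ gives $p_i m$ when done row-wise and $q_i n$ when done column-wise. This step uses only the additive row/column sum conditions and thus is valid for complex entries, not merely nonnegative ones — this is the point that lets me upgrade the result to the complex setting.

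Writing $d=\gcd(n,m)$ and noting that $n/d$ and $m/d$ are coprime, the equality $p_i m = q_i n$ forces $p_i$ to be a positive multiple of $n/d$. Summing,
$$n = \sum_{i=1}^t p_i \geq t\cdot\frac{n}{d},$$
so $t\leq d$. On the other hand, each connected component $C_i$ is a connected graph on $p_i+q_i$ vertices and therefore contains at least $p_i+q_i-1$ edges. Putting everything together,
$$|\supp A| \;=\; \sum_{i=1}^t |E(C_i)| \;\geq\; \sum_{i=1}^t (p_i+q_i-1) \;=\; n+m-t \;\geq\; n+m-d,$$
which is exactly \eqref{eqD.1.1}.

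I do not anticipate a real obstacle: the proof is purely combinatorial once the graph is in place. The one subtlety to watch is that the divisibility step $p_i\in (n/d)\Z_{>0}$ relies on the additivity of the row/column sums over any subset of rows/columns, and this carries over verbatim for complex-valued $A$. No appeal to positivity, to extremality, or to any of the earlier propositions is needed for this lower bound.
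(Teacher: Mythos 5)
Your proposal is correct and follows essentially the same route as the paper: the associated bipartite graph, the per-component weight-sum identity $p_i m = q_i n$ (valid for complex entries since only additivity is used), the resulting bound $t \leq \gcd(n,m)$ on the number of components, and the edge count $|E| \geq n+m-t$. The only cosmetic difference is that you count edges component by component while the paper bounds $|E|$ globally via the observation that adding an edge reduces the number of components by at most one; these are equivalent.
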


\begin{proof}
Let $G(A) = (U,V,E,w)$ be the graph of
the matrix $A$, and let
 $r$ be the number of 
connected components of $G(A)$.
Since the graph has $n+m$ vertices, the number
of edges in the graph must be at least $n + m - r$ 
(for adding an edge to a graph 
can reduce the number of
 connected components by at most one). So we obtain
\begin{equation}
\label{eqD.1.2}
|\supp A| = |E| \geq  n + m - r.
\end{equation}

Let $H_1, \dots, H_r$ denote the 
 connected components of $G(A)$. 
For each $1 \leq k \leq r$,
assume that 
$H_k$ has $n_k$ vertices in $U$, and
$m_k$ vertices in $V$.  Since the
sum of  the weights of  the edges incident to 
a vertex from $U$ is $m$, and
the sum of  the weights of  the edges incident to 
a vertex from $V$ is $n$, then
the sum of  the weights of all the edges of 
the subgraph
 $H_k$ is equal to $ m n_k $ on one hand, and 
 to $n  m_k$ on the
other hand. Hence
\begin{equation}
\label{eqD.1.7}
\frac{n_k}{m_k} = \frac{n}{m}
\quad (1 \leq k \leq r).
\end{equation}

Let us now write 
 $n = dp$, $m = dq$, where 
$d = \gcd(n,m)$ and 
$p,q$ are coprime. It follows from
\eqref{eqD.1.7} that each one of
$n_1, \dots, n_r$ can be no
 smaller than $p$.
But  $n_1 + \dots + n_r = n$,
so we must have $rp \leq n$, or equivalently,
$r \leq d$.
Together with \eqref{eqD.1.2} this implies
\eqref{eqD.1.1}.
\end{proof}

\subsubsection*{Remark}
We later found that a similar argument 
appeared in \cite[Section 4]{DM62}.

\subsection{Block structure of 
arrays with minimal support size}
Now that we have established the lower bound 
\eqref{eqD.1.1}, our next goal is to look into
the structure of  the
 $n \times m$ doubly stochastic arrays 
for which an \emph{equality} in
\eqref{eqD.1.1} is attained.
 We  keep using the notation and terminology
from the proof of \thmref{thmD1.1}. 
In particular we denote
\begin{equation}
\label{eqD8.15.4}
n = dp, \quad m = dq, \quad 
d = \gcd(n,m), \quad 
\text{$p,q$ are coprime.}
\end{equation}

Suppose that $A$ is an
 $n \times m$ doubly stochastic array 
whose  support is of 
size exactly $n + m - d$.
Then all the inequalities in 
the proof above become
equalities. So the number of
 connected components of
the graph $G(A)$ is exactly $d$,
and each connected component 
has exactly $p$ vertices in $U$ and
$q$ vertices in $V$.  This implies
that by 
 a permutation of rows and columns
we may assume that
$A$ has the structure of a block diagonal matrix 
consisting of $d$ blocks of size $p \times q$ each.
Moreover, for each block
the sum of the entries at each row is $m$ and
at each column  is $n$, so each block
is equal to the scalar $d$ times a certain
$p \times q$
doubly stochastic array. That is to say,
\begin{equation}
\label{eqD8.1.4}
A = d \times
\begin{bmatrix}
A_1   \\
 & A_2   \\
 &  & \ddots   \\
 &  &  & A_d   
\end{bmatrix}
\end{equation}
where each one of $A_1, A_2, \dots, A_d$
 is a $p \times q$ doubly stochastic array.

We also observe that the
support size of each $A_k$ can be
no smaller than $p+q-1$, again by
\thmref{thmD1.1}. So we have
\begin{equation}
\label{eqM1.4.16}
|\supp A| = \sum_{k=1}^{d}
|\supp A_k| \geq d(p+q-1) = n+m-d.
\end{equation}
Hence $|\supp A| = n + m - d$
implies that $|\supp A_k|  = p+q-1$
for each $1 \leq k \leq d$.

It is obvious that these conditions, necessary
for an
 $n \times m$ doubly stochastic array  
to have support of 
size $n + m - d$,
are also sufficient. 
That is, suppose that we are given
 $d$ matrices $A_1, A_2, \dots, A_d$
such that each $A_k$ is a $p \times q$
doubly stochastic array with support
of size $p+q-1$. Then the $n \times m$ matrix
$A$ given by
 \eqref{eqD8.1.4} is 
an $n \times m$ doubly stochastic array with
support of size $n + m - d$.
We have thus proved the following:

\begin{thm}
\label{thmD8.44.1}
For any $n,m$, let the numbers  $d,p,q$ be defined
by  \eqref{eqD8.15.4}. Then
an  $n \times m$ doubly stochastic array 
$A$ has support of size $n + m - d$
if and only if, possibly after
 a permutation of rows and columns,
$A$ has 
 the form
 \eqref{eqD8.1.4} where
each one of 
 $A_1, \dots, A_d$
is a  $p \times q$
doubly stochastic array
with support of size $p+q-1$.
\end{thm}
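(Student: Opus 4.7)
The plan is to prove the two directions of the equivalence separately. Sufficiency is a short direct verification of row and column sums, so the real content of the theorem lies in the necessity, which I would obtain by tracing through the equality cases of the inequalities appearing in the proof of \thmref{thmD1.1}.

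For the necessity direction, I would assume $|\supp A| = n+m-d$ and examine which inequalities in that earlier proof must become equalities. The estimate $|\supp A| = |E| \geq n+m-r$ is tight precisely when adding each edge of $G(A)$ strictly decreases the number of connected components, i.e.\ when $G(A)$ is a forest. The subsequent bound $r \leq d$, which was deduced via $rp \leq n_1 + \cdots + n_r = n$ together with the relations $n_k/m_k = n/m$ from \eqref{eqD.1.7}, is tight only when $r = d$ and simultaneously $n_k = p$, $m_k = q$ for every $k$. Consequently $G(A)$ has exactly $d$ connected components, each containing $p$ row-vertices and $q$ column-vertices. Since no edge joins vertices belonging to different components, a permutation of the rows and columns that lists the vertices of each component consecutively brings $A$ to block diagonal form with $d$ diagonal blocks of size $p \times q$. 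Within each block $B_k$, the inherited column sums equal $n = dp$ and the row sums equal $m = dq$, so the rescaled matrix $A_k := d^{-1} B_k$ is a $p \times q$ doubly stochastic array. Finally, $\sum_k |\supp A_k| = |\supp A| = n+m-d = d(p+q-1)$, while $|\supp A_k| \geq p+q-1$ by \thmref{thmD1.1} applied to each block, so equality must hold for every $k$.

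For the sufficiency, given $p \times q$ doubly stochastic arrays $A_1, \ldots, A_d$ with $|\supp A_k| = p+q-1$, the matrix $A$ constructed as in \eqref{eqD8.1.4} has column sums $d \cdot p = n$ and row sums $d \cdot q = m$, and its support is the disjoint union of the (disjointly placed) supports of the diagonal blocks, so $|\supp A| = d(p+q-1) = n+m-d$. The step carrying all the weight of the theorem is the equality analysis of \thmref{thmD1.1}: saturating the bounds $|E| \geq n+m-r$ and $r \leq d$ simultaneously forces not merely the correct number of components but also that each component has vertex parts of exactly the sizes $(p,q)$, which is precisely what produces the block diagonal structure. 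Everything else is routine bookkeeping.
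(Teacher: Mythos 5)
Your proof is correct and follows essentially the same route as the paper: both obtain the necessity by tracing the equality cases in the proof of \thmref{thmD1.1} (forcing $r=d$ and $n_k=p$, $m_k=q$ for each component, hence the block diagonal form after permutation, with the support count $\sum_k|\supp A_k| = d(p+q-1)$ pinning down each block's support size), and both treat the sufficiency as a routine verification. No gaps.
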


The main point of this result is that
it  reduces
the analysis of
  $n \times m$ doubly stochastic arrays
with  minimal support size 
to the special case when $n,m$
are \emph{coprime}.

\subsection{Constructing arrays with minimal support size}

Our next goal is to prove that the lower bound
\eqref{eqD.1.1} is sharp, that is, to establish
the existence of 
  $n \times m$ doubly stochastic arrays
for which an equality in
\eqref{eqD.1.1} is attained.

\begin{thm}
\label{thmD3.1}
For any $n,m$ there exists
an $n \times m$   doubly stochastic  array $A$  such that
\begin{equation}
\label{eqD.3.1}
|\supp A| = n + m - \gcd(n,m).
\end{equation}
\end{thm}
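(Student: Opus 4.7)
The plan is to first reduce to the case where $n$ and $m$ are coprime by invoking the block structure from \thmref{thmD8.44.1}, and then construct explicitly a $p \times q$ doubly stochastic array of support size $p+q-1$ when $\gcd(p,q)=1$.

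For the reduction, set $d=\gcd(n,m)$ and write $n=dp$, $m=dq$ with $p,q$ coprime. Suppose one can exhibit a single $p \times q$ doubly stochastic array $A_0$ with $|\supp A_0| = p+q-1$. Then the $n \times m$ block-diagonal matrix $A = d \cdot \operatorname{diag}(A_0, A_0, \dots, A_0)$, consisting of $d$ identical blocks along the diagonal, is doubly stochastic by a direct check (each column sum is $d \cdot p = n$ and each row sum is $d \cdot q = m$), and has support size exactly $d(p+q-1) = n+m-d$, as required. In fact \thmref{thmD8.44.1} directly guarantees this, so the whole problem reduces to the coprime case.

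For the coprime case I would use a \emph{staircase} construction based on overlaying two uniform partitions of a common interval. Partition $[0,pq]$ in two ways: into $p$ row-intervals $I_i = [(i-1)q, iq]$ of length $q$, and into $q$ column-intervals $J_j = [(j-1)p, jp]$ of length $p$. Define the $p \times q$ matrix $A_0$ by
\[
a_{ij} = |I_i \cap J_j|.
\]
Then $\sum_j a_{ij} = |I_i| = q$ and $\sum_i a_{ij} = |J_j| = p$, so $A_0$ is a $p \times q$ doubly stochastic array with nonnegative entries.

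The remaining step, which is also the main point of the argument, is to count the support of $A_0$. The entry $a_{ij}$ is nonzero precisely when $I_i$ and $J_j$ overlap in a set of positive length, so $|\supp A_0|$ equals the number of subintervals into which the combined breakpoints $\{0,q,2q,\dots,pq\}$ and $\{0,p,2p,\dots,pq\}$ divide $[0,pq]$. The first set has $p+1$ points and the second has $q+1$ points; coprimality of $p$ and $q$ is exactly what ensures the only common points are the endpoints $0$ and $pq$. Hence the union contains $p+q$ distinct points, producing $p+q-1$ subintervals, so $|\supp A_0| = p+q-1$. The coprimality hypothesis is the crucial ingredient here: without it one would only obtain $p+q-\gcd(p,q)$ subintervals directly, which is why the reduction step above was needed first.
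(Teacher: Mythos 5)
Your proof is correct. The reduction to the coprime case via \thmref{thmD8.44.1} is exactly the paper's first step, but your construction for coprime $p,q$ differs from both of the paper's methods. The paper offers (a) the ``discrete trapezoid method,'' which convolves two indicator functions on $\Z_{pq}$ and transports the result to a $p\times q$ array through the Chinese remainder isomorphism, and (b) the recursive ``Euclidean algorithm method.'' Your staircase array $a_{ij}=|I_i\cap J_j|$, obtained by overlaying the two uniform partitions of $[0,pq]$, is the north-west-corner-type solution of the transportation problem; it avoids both the group-theoretic machinery and the induction, and the support count reduces to the elementary observation that coprimality forces the two sets of breakpoints to meet only at $0$ and $pq$. (In fact your construction is a close cousin of the trapezoid method --- the multiset of entries is the same, essentially the values of $\chi_p\ast\chi_q$ --- but arranged in a monotone staircase rather than scattered by the CRT bijection, so the argument is self-contained and arguably simpler.) The one step you state tersely but which does hold is the bijection between nonzero entries and atoms of the common refinement: each positive-length $I_i\cap J_j$ contains no breakpoint in its interior, since its interior lies inside both $((i-1)q,iq)$ and $((j-1)p,jp)$, so it is exactly one atom, and conversely each atom determines a unique pair $(i,j)$. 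What your approach gives up relative to the paper is the additional structural information the two named methods carry (the tiling interpretation on $\Z_{nm}$, and the recursive scheme later reused in the proof of \thmref{thmE4.9}), but as a proof of \thmref{thmD3.1} alone it is complete.
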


In fact we only have to prove this
for coprime $n,m$, thanks to \thmref{thmD8.44.1}.

Below we present two different methods for constructing
$n \times m$   doubly stochastic  
arrays of minimal
support size. We call the  first one  the
``discrete trapezoid method'' 
and the second one the ``Euclidean algorithm method''.

\subsubsection{The discrete trapezoid method}
Let $\Z_{nm}$ be the
additive group of residue classes modulo $nm$,
and let $\chi_k$  denote the indicator
function of the subset
$\{0,1,\dots,k-1\}$ of $\Z_{nm}$.
We consider 
 a function $\tau: \Z_{nm} \to \R$ defined
as the convolution
\begin{equation}
\label{eqM3.4}
\tau(t) = (\chi_n \ast \chi_m)(t) = \sum_{s \in \Z_{nm}} \chi_n(t-s)\chi_m(s),
\quad
t \in \Z_{nm}.
\end{equation}
By a straightforward calculation one
can verify that
\begin{equation}
\label{eqM3.4.5.1a}
\tau(t) = \min \{t+1,  n+m-t-1, n, m \}, 
\quad 0 \leq t \leq n+m-2,
\end{equation}
and
\begin{equation}
\label{eqM3.4.5.1b}
\tau(t) = 0, \quad n+m-1  \leq t \leq nm-1.
\end{equation}
We call $\tau$  the \emph{discrete trapezoid} function
on the group $\Z_{nm}$.

For example, in \figref{fig:trapezoid}
the discrete trapezoid function 
is shown
for $n=5$, $m=7$.

% -------------------------------------

\begin{figure}[ht]
\centering

\begin{tikzpicture}[scale=0.41, style=mystyle,thin]

\draw[myroundrect] (-2,-3) rectangle  (36,8);

% define x-axis nodes

\foreach \j in {0,1,2,...,34} {
   \coordinate (x\j) at (\j,0);
  }

% define graph nodes

\foreach \j [evaluate=\j as \val using {int(min(\j+1, 5, 11-\j))}] 
 in {0,1,2,...,10} {
   \coordinate (y\j) at (\j, \val);
}

\foreach \j in {11,12,13,...,34} {
   \coordinate (y\j) at (\j,  0);
  }

% draw edges between graph nodes

\foreach \j [evaluate=\j as \jp using {int(\j+1)}]   in {0,1,2,...,33} {
   \draw[myedge] (y\j) edge  (y\jp);
}

% draw vertical edges 

\foreach \j   in {0,1,2,...,10} {
   \draw[mylightedge] (x\j) edge  (y\j);
}

% draw x-axis nodes

\foreach \j in {0,1,2,...,34} {
   \draw[xnode] (x\j) node[xnodelabel]{$\scriptscriptstyle \j$} circle (\mylittlecirc); 
}

% draw graph nodes

\foreach \j  [evaluate=\j as \val using {int(min(\j+1, 5, 11-\j))}] 
 in {0,1,2,...,10} {
   \draw[ygnode] (y\j) 
       node[ygnodelabel]{$\scriptstyle \val$} circle (\mylittlecirc); 
}

\foreach \j in {11,12,13,...,34} {
   \draw[ygnode] (y\j) circle (\mylittlecirc); 
 }

\end{tikzpicture}

\caption{The discrete trapezoid function for $n=5$, $m=7$.}
\label{fig:trapezoid}
\end{figure}

% -------------------------------------

Now assume  that $n,m$ are \emph{coprime}.
We denote by $\Z_n$ and $\Z_m$ the
additive groups of residue classes modulo $n$
and $m$ respectively. By the
Chinese remainder theorem
 there 
is a group isomorphism 
$\varphi: \Z_{nm} \to \Z_n \times \Z_m$
given by
$\varphi(t) =  
(t \,\operatorname{mod} \, n, \,
t \,\operatorname{mod} \, m)$.
This isomorphism allows us to lift the
discrete trapezoid  function $\tau$
to a new function 
\begin{equation}
\label{eqM3.2}
A: \Z_n \times \Z_m \to \R
\end{equation}
defined by 
$A(\varphi(t)) = \tau(t)$,
$t \in \Z_{nm}$. We use
\eqref{eqM3.2}
as an alternative way to
represent a real
 $n \times m$ matrix $A$,
in which  the rows of the matrix   are 
indexed by residue classes modulo $n$,
while  the columns 
indexed by 
residue classes modulo $m$.

We claim that the matrix $A$ thus constructed
is  doubly 
stochastic. To see this, 
let $H_k$  denote the
subgroup of $\Z_{nm}$ generated
by the element $k$. We then observe that
 the discrete trapezoid function $\tau$
\emph{tiles} the group $\Z_{nm}$ by translations
along each one of 
the two subgroups $H_n$ and $H_m$. That is to say,
\begin{equation}
\label{eqM1.15}
\sum_{s \in H_n} \tau(t-s) = m, \quad
\sum_{s \in H_m} \tau(t-s) = n, \quad
t \in \Z_{nm}.
\end{equation}
This can be verified directly using the
definition \eqref{eqM3.4} of the function
$\tau$.

We next observe that 
the isomorphism 
$\varphi$ maps the subgroup
$H_n$ of $\Z_{nm}$ onto 
the subgroup
$\{0\} \times \Z_m$
of $\Z_n \times \Z_m$.
Hence for each $i \in \Z_n$, the set
$\{(i,j) : j \in \Z_m\}$ is the image under
$\varphi$ of a certain coset of $H_n$ in $\Z_{nm}$,
say, the coset $a_i - H_n$. It follows that 
\begin{equation}
\label{eqM1.12}
\sum_{j \in \Z_m} A(i,j) = 
\sum_{s \in H_n } \tau(a_i-s) = m,
\end{equation}
where in the last equality we used \eqref{eqM1.15}.
In a similar way, $\varphi$ maps the subgroup
$H_m$ onto  $\Z_n \times \{0\}$,
so for each $j \in \Z_m$ the set
$\{(i,j) : i \in \Z_n\}$ is the image under
$\varphi$ of a coset $b_j - H_m$, and we obtain
\begin{equation}
\label{eqM1.13}
\sum_{i \in \Z_n} A(i,j) = 
\sum_{s \in H_m } \tau(b_j-s) = n,
\end{equation}
again using \eqref{eqM1.15}.
Since the values of $A$ are clearly nonnegative,
we  conclude  from 
\eqref{eqM1.12} and 
\eqref{eqM1.13} that 
$A$ is a doubly 
stochastic array.

 We note that the correspondence between  
the $n \times m$ doubly 
stochastic arrays and the
non\-negative functions which 
tile the group $\Z_{nm}$ 
by translations
along each one of 
the two subgroups $H_n$ and $H_m$
(where $n,m$ are coprime)
was pointed out in an earlier version of
\cite{KP22}.

Finally we notice, using
\eqref{eqM3.4.5.1a} and
\eqref{eqM3.4.5.1b},  that
\begin{equation}
\label{eqM1.17}
|\supp A| = |\supp \tau| = n+m-1,
\end{equation}
so that the support of  $A$ is of
the smallest possible size  
according  to \thmref{thmD1.1}.

For example, in the case
 $n=5$, $m=7$ we obtain the matrix
\begin{equation}
\label{eqM1.9}
\begin{bmatrix}
1 & & & 1 & & 5   \\
 & 2 & & & &  & 5  \\
4 &  & 3   \\
  & 3 &  & 4   \\
  &  & 2 &  & 5 
\end{bmatrix}
\end{equation}
which is a $5 \times 7$ doubly 
stochastic array that has $11 = 5 + 7 - 1$ nonzero
entries.

One other example is 
the matrix on the right hand side of \eqref{eqDS1.3},
which 
is obtained by the discrete trapezoid method
for $n=3$, $m=4$.

\subsubsection{The Euclidean algorithm method}
\label{subsubsec:euclid}
We now turn to describe our second method  for constructing
$n \times m$ doubly stochastic  
arrays of minimal
support size. The same method was 
proposed also in \cite{Lou23}. 

Assume that  $n \leq m$, and write
\begin{equation}
\label{eqM2.11}
m = kn + r, \quad k \geq 1, \quad 0 \leq r \leq  n-1.
\end{equation}

If $r=0$ then $m = kn$, and in this case we already know from
\propref{propPR5.7} how to construct an
  $n \times m$ doubly stochastic array
of the minimal support size $kn$. 

In the remaining case $1 \leq r \leq  n-1$ we proceed by induction as follows. 
We first use  the inductive hypothesis to find  an $r \times n$
doubly stochastic array $B$ of support size 
$r +n - \gcd(r,n)$. Next, let
 $D_n$ denote  the $n \times n$ identity matrix times the scalar $n$,
that is, $D_n$ is  an $n \times n$ diagonal matrix whose
 main diagonal entries are all equal to $n$. 
We then construct an $n \times m$ matrix $A$ given by
\begin{equation}
\label{eqM2.12}
A = \begin{bmatrix}
D_n & D_n & \cdots & D_n & B^{\top}
\end{bmatrix},
\end{equation}
that is, $A$ is
obtained by horizontally concatenating 
$k$ copies of $D_n$ and the transpose of the matrix $B$.
Then $A$ has nonnegative entries and
 the sum of the entries at each row is $m$
 and at each column  is $n$. Hence $A$ is
an  $n \times m$ doubly stochastic array.

It remains to find the size of the support of $A$.
We have
\begin{equation}
\label{eqM2.17}
|\supp A| = k |\supp D_n| + |\supp B|
= kn + r+n-\gcd(r,n).
\end{equation}
But $kn+r = m$ and $\gcd(r,n) = \gcd(n,m)$, so we 
conclude  from
\eqref{eqM2.17} that
\begin{equation}
\label{eqM2.18}
|\supp A| = m + n - \gcd(n,m),
\end{equation}
which is indeed the smallest possible support size.

For example, applying this algorithm for
 $n=5$, $m=7$ yields the matrix
\begin{equation}
\label{eqM1.16}
\begin{bmatrix}
5 & & & & & 2 &  \\
 & 5 & & & &  & 2 \\
 &  & 5 & & & 2 &  \\
  &  &  & 5 & &  & 2 \\
  &  &  &  & 5 & 1 & 1  
\end{bmatrix}
\end{equation}
which is a $5 \times 7$ doubly 
stochastic array with support of size
 $11 = 5 + 7 - 1$.

One more example is 
the matrix on the left hand side of \eqref{eqDS1.3},
which 
is obtained by the  Euclidean algorithm method
for $n=3$, $m=4$.

\subsubsection{Remarks}
In general, the discrete trapezoid method
and the  Euclidean algorithm method
may yield different, and in fact non-equivalent,  doubly 
stochastic arrays. For example, 
the $5 \times 7$ matrices 
in \eqref{eqM1.9}
and  \eqref{eqM1.16}, obtained
by the two different methods,  are obviously
not equivalent to each other.

We also point out that  when
 performing
the Euclidean algorithm method
for $n,m$ coprime, 
we may choose 
at any stage of the induction
to use 
the discrete trapezoid method
rather than continue further with the
inductive process.
In this way one can obtain more examples 
of $n \times m$   doubly stochastic  
arrays of minimal
support size.

% =======================================

\section{Extremal arrays with non-minimal support size}
\label{secD2}

In this section we prove \thmref{thmA5.1}
that characterizes the possible sizes of the supports
of all extremal $n \times m$ arrays. Recall that
the theorem 
asserts that if neither one of $n,m$ divides the other,
then  any extremal $A \in \Snm$
has support size of   the form
\begin{equation}
\label{eqD2.9.1}
|\supp A| = n + m - s, 
\end{equation}
where
\begin{equation}
\label{eqD2.9.2}
1 \leq s \leq \gcd(n,m). 
\end{equation}
Moreover, this condition
is sharp, that is, for any 
integer $s$ satisfying \eqref{eqD2.9.2}
 there exists an  extremal array 
 $A \in \Snm$  whose support size 
is given by \eqref{eqD2.9.1}

(The reader is reminded that 
this is no longer  true if one of $n,m$ 
does divide the other. In fact, if $m=kn$ then
every extremal array in $\S_{n, kn}$ has
support of size  $kn$,
by  \propref{propPR5.7}.)

\subsection{Estimating the support size of an extremal array}
We first show that
the conditions \eqref{eqD2.9.1}
and \eqref{eqD2.9.2} hold for 
any extremal
$n \times m$ doubly stochastic  array.

\begin{thm}
\label{thmE4.2}
For all $n,m$ 
the  support size of   
any extremal array  in $\Snm$ is of
the form
$n + m - s$, where $1 \leq s \leq \gcd(n,m)$. 
\end{thm}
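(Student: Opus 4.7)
The plan is to combine two facts already available: the forest structure of the graph of an extremal array (\propref{propPR5.4}), and the component-counting argument used in the proof of \thmref{thmD1.1}.

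First, I would invoke \propref{propPR5.4}: since $A$ is extremal, the associated graph $G(A)$ contains no cycles, hence $G(A)$ is a forest. For any forest on $N$ vertices with $r$ connected components, the number of edges equals exactly $N - r$. Applying this to $G(A)$, which has $n+m$ vertices, yields
\begin{equation*}
|\supp A| = |E(G(A))| = n + m - r,
\end{equation*}
where $r$ denotes the number of connected components of $G(A)$. In particular $r \geq 1$, so setting $s = r$ we immediately get $|\supp A| = n + m - s$ with $s \geq 1$.

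Next I would bound $s$ from above by $\gcd(n,m)$. For this I would reuse the weight-counting argument from the proof of \thmref{thmD1.1}: if $H_1, \dots, H_r$ are the connected components of $G(A)$ and $H_k$ has $n_k$ vertices in $U$ and $m_k$ vertices in $V$, then summing the weights of edges in $H_k$ in two ways (once through the row vertices, once through the column vertices) gives $m\, n_k = n\, m_k$, so $n_k/m_k = n/m$. Writing $n = dp$, $m = dq$ with $d = \gcd(n,m)$ and $p, q$ coprime, this forces $p \mid n_k$, hence $n_k \geq p$ for every $k$. Summing over $k$ yields $rp \leq \sum_k n_k = n = dp$, so $r \leq d = \gcd(n,m)$.

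Combining the two steps, $s = r$ satisfies $1 \leq s \leq \gcd(n,m)$ and $|\supp A| = n + m - s$, as required. There is no serious obstacle here; the only thing to be careful about is that the component count $r$ enters \emph{as an equality} in $|\supp A| = n + m - r$ precisely because extremality upgrades the forest inequality $|E| \geq N - r$ (valid for any graph) to the forest identity $|E| = N - r$. Everything else is a direct quotation of the earlier argument.
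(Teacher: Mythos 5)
Your proof is correct and follows essentially the same route as the paper: extremality plus \propref{propPR5.4} gives the forest identity $|\supp A| = n+m-r$, and the component count $r$ is bounded above by $\gcd(n,m)$. The only cosmetic difference is that you re-derive the bound $r \le \gcd(n,m)$ by repeating the weight-counting argument, whereas the paper simply cites the already-established lower bound $|\supp A| \ge n+m-\gcd(n,m)$ from \thmref{thmD1.1}.
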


\begin{proof}
By  \thmref{thmD1.1} we  know that 
every $A \in \Snm$ has support of size no less
than $n + m - \gcd(n,m)$. Now let 
 $A$ be extremal, then by \propref{propPR5.4}
the graph $G(A)$  contains no cycles.
Since the graph has $n+m$ vertices, this implies
that the number
of edges in the graph 
must be $n + m - r$, where 
 $r$ is the number of 
connected components of the graph. 
Hence $\supp A$ 
is of size $n + m - r$,
which can be no greater 
than $n + m - 1$.
\end{proof}

\subsection{Constructing extremal arrays of given support size}

Next we turn to  prove that 
 if neither one of $n,m$ divides the other,
then the estimate given in \thmref{thmE4.2} is sharp.
That is, we will construct examples of extremal $n \times m$
  doubly stochastic   arrays  whose
support sizes attain all values of the form $n+m-s$,
where $1 \leq s \leq \gcd(n,m)$.

\begin{thm}
\label{thmE4.9}
Assume that neither one of $n,m$ divides the other.
Then for any $s$ such that $1 \leq s \leq \gcd(n,m)$,
there exists an  extremal array 
 $A \in \Snm$  whose support is of size $n+m-s$.
\end{thm}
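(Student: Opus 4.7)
The plan is to begin with an extremal array from \thmref{thmD3.1} of minimal support size $n+m-d$ (where $d:=\gcd(n,m)$), whose graph by \thmref{thmD8.44.1} has exactly $d$ connected components, and then iteratively apply a local modification that merges two components into one while preserving extremality and increasing the support size by exactly one. After $d-s$ such merges the resulting array is extremal (its graph is a forest, by \propref{propPR5.4}) with $s$ components, hence of support size $n+m-s$.

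The merging step is the following. Suppose $A\in\Snm$ is extremal and its graph has components $C_1,\dots,C_r$ with $r\geq 2$. I fix two of them $C_1,C_2$, and choose edges $(i_1,j_1)\in C_1$ and $(i_2,j_2)\in C_2$ with distinct weights $a:=A(i_1,j_1)<A(i_2,j_2)=:b$. Define $A'$ to agree with $A$ except at the four positions $(i_1,j_1),(i_1,j_2),(i_2,j_1),(i_2,j_2)$, where I set
\begin{equation*}
A'(i_1,j_1)=0,\qquad A'(i_1,j_2)=a,\qquad A'(i_2,j_1)=a,\qquad A'(i_2,j_2)=b-a.
\end{equation*}
Since $b>a$ all entries of $A'$ are nonnegative, and a direct check on rows and columns shows $A'\in\Snm$. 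For the graph: removing the edge $(i_1,j_1)$ splits the tree $C_1$ into two subtrees $C_1^{(a)}\ni i_1$ and $C_1^{(b)}\ni j_1$; the new edges $(i_1,j_2)$ and $(i_2,j_1)$ then reattach these two subtrees to $C_2$, producing a single tree on $C_1\cup C_2$, while the remaining components stay untouched. Hence $G(A')$ is still a forest, with exactly $r-1$ components and one more edge than $G(A)$.

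The main obstacle is ensuring that edges of distinct weight can always be chosen, which reduces to showing that no component of an extremal $A\in\Snm$ can have all its edges of equal weight. By the doubly stochastic relation $m|U_k|=n|V_k|$, each component has row-side of size $pt_k$ and column-side of size $qt_k$, where $p=n/d$, $q=m/d$ and $t_k\geq 1$, and is a tree with $(p+q)t_k-1$ edges. If all edges carried a common weight $w$, then counting total edge weight via the rows would give $w=dpqt_k/((p+q)t_k-1)$; the integrality of the row- and column-degrees $m/w$ and $n/w$ would then force the divisibilities $pt_k\mid qt_k-1$ and $qt_k\mid pt_k-1$. The first implies $q>p$ and the second implies $p>q$, which is impossible as soon as $p,q\geq 2$ -- precisely the hypothesis that neither $n$ nor $m$ divides the other.

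Given this, each of $C_1$ and $C_2$ contains edges of at least two distinct weights $\alpha_1<\alpha_2$ and $\beta_1<\beta_2$, and among the four pairs in $\{\alpha_1,\alpha_2\}\times\{\beta_1,\beta_2\}$ at least one consists of unequal values (otherwise $\alpha_1=\alpha_2$ and $\beta_1=\beta_2$), yielding a valid merging pair. Iterating the merge $d-s$ times, starting from the array produced by \thmref{thmD3.1}, gives an extremal $A\in\Snm$ with $s$ components and support of size $n+m-s$, which completes the construction.
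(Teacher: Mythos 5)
Your proof is correct, and it takes a genuinely different route from the paper's. The paper establishes the key case $m-n=\gcd(n,m)$ by assembling four explicit weighted graph prototypes into a forest with the prescribed number of connected components (\lemref{lemE4.1}), and then reaches general $n,m$ by a Euclidean-algorithm induction that prepends $k$ copies of the scaled identity block $D_n$. You instead start from the minimal-support array of \thmref{thmD3.1}, whose associated graph is a forest of $d=\gcd(n,m)$ trees by \thmref{thmD8.44.1}, and perform $d-s$ local surgeries: each surgery zeroes one entry and creates two new ones across two components, which I verified preserves the row and column sums, keeps the graph acyclic (the two new edges each join previously disconnected pieces, so the merged component is again a tree), and increases the support size by exactly one; extremality is then preserved via \propref{propPR5.4}. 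The only delicate point --- that one can always find two edges of distinct weights in two distinct components --- you settle with a correct divisibility argument: a tree component with $pt_k$ row-vertices and $qt_k$ column-vertices and constant edge weight would force $pt_k\mid qt_k-1$ and $qt_k\mid pt_k-1$ simultaneously, impossible once $p,q\geq 2$, which is exactly the hypothesis that neither of $n,m$ divides the other. Your approach is shorter and more conceptual, interpolating directly between the two extreme support sizes and avoiding both the prototype constructions and the induction; the paper's approach has the advantage of producing fully explicit matrices. Both arguments ultimately rest on the same characterization of extremality as acyclicity of the associated bipartite graph.
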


Our proof consists of 
two parts. In the first part, which
is the key one in the  proof,
we establish 
the result in the special case where
$\gcd(n,m)= m-n$.
Then in the second part we use the
Euclidean algorithm method
to extend the result to 
the general case.

\subsubsection{}
The key part of  our construction
is performed in the following lemma:

\begin{lem}
\label{lemE4.1}
Let $n = dp$, $m= d(p+1)$,
where $d>1$, $p>1$.
Then for any  $1 \leq s \leq d-1$,
there is an  extremal  array
 $A \in \Snm$  whose support is of size $n+m-s$.
\end{lem}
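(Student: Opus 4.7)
My plan is to reduce the lemma to a single construction problem. I first claim that for every $t \geq 2$ (with $p \geq 2$) there exists a $pt \times (p+1)t$ doubly stochastic array whose bipartite graph is a spanning tree (equivalently, an extremal array with support $(2p+1)t - 1$). Granting this, the lemma follows by a block-diagonal construction: given $1 \leq s \leq d - 1$, choose the partition $d = (d - s + 1) + 1 + \cdots + 1$ with $s$ summands $t_1, \ldots, t_s$, let $B_{t_k}$ denote the corresponding tree-structured $pt_k \times (p+1)t_k$ array (for $t_k = 1$ one may use any extremal $p \times (p+1)$ array, e.g.\ the alternating path $c_1 - r_1 - c_2 - \cdots - r_p - c_{p+1}$), and form the $n \times m$ array $A$ whose $k$-th diagonal block, after a suitable permutation of rows and columns, equals $(d/t_k) B_{t_k}$. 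Then $A \in \Snm$, its bipartite graph is a forest with exactly $s$ connected components (so $A$ is extremal by \propref{propPR5.4}), and
\begin{equation*}
|\supp A| \;=\; \sum_{k=1}^{s} \bigl((2p+1) t_k - 1\bigr) \;=\; (2p+1) d - s \;=\; n + m - s.
\end{equation*}

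The main task is therefore to construct a $pt \times (p+1)t$ tree-structured doubly stochastic array for each $t \geq 2$. My starting point is a ``cut formula'': in any spanning tree of the bipartite graph, the weight of every edge $e$ is uniquely forced by the marginals, and equals $(p+1) t \cdot |T_1 \cap U| - p t \cdot |T_1 \cap V|$, where $T_1$ is the subtree containing the row endpoint of $e$ after removing $e$. Positivity of all weights is thus equivalent to the combinatorial condition that every such row-side subtree have strictly positive ``charge'' $(p+1) |T_1 \cap U| - p |T_1 \cap V|$ (regarding each row vertex as carrying charge $+(p+1)$ and each column vertex as carrying charge $-p$).

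I intend to build such trees by induction on $t$. For $t$ in the range $2 \leq t \leq p$ an explicit ``spider'' construction suffices: a single central row connected to $t+1$ ``hub'' columns, with each hub column attached to at most $p - 1$ ``satellite'' rows, and each satellite row additionally carrying its own column-leaf; a direct computation shows that all forced weights are positive. For $t > p$ I plan to use an ``$A$ + bridge + $B$'' decomposition: split the vertex set into a small piece $A$ of charge $+1$ (the simplest choice being a single row paired with a single column) and a remainder $B$ of charge $-1$, joined by a single bridge edge from the row of $A$ to a distinguished column $c^* \in B$. This bridge then automatically carries positive weight $t$, and one recursively builds a spanning tree inside $B$ rooted at $c^*$. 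The main obstacle will be this recursive construction inside $B$: the tree must be arranged so that every ``row-child'' subtree has charge $\geq 1$ and every ``column-child'' subtree has charge $\leq -1$, since any balanced sub-piece (of charge $0$) would force a zero weight on an adjacent edge. This in turn requires distributing the extra columns carefully so that the charge condition propagates correctly at every level of the recursion; I would verify the construction explicitly in the small cases $p = 2$, $t \in \{2, 3, 4\}$ and then generalize the pattern.
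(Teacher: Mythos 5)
Your reduction to building, for each $t\ge 2$, a $pt\times(p+1)t$ doubly stochastic array whose bipartite graph is a spanning tree is sound (the support count $\sum_k((2p+1)t_k-1)=n+m-s$ is right, the scaled blocks are genuinely doubly stochastic, and \propref{propPR5.4} gives extremality), your cut formula for the forced edge weights is correct, and the spider construction does verify for $2\le t\le p$: the hub edge to a column with $k_i\le p-1$ satellites gets weight $t(p-k_i)>0$, the satellite edges get weights $t$ and $tp$, and the central row sum comes out to $t(p+1)$ as required.

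The genuine gap is the case $t>p$, which you cannot avoid: for $s=1$ your partition forces a single tree with $t=d$, and $d$ can be arbitrarily large relative to $p$ (e.g.\ $p=2$, $d=100$). For that range you offer only the ``$A$ + bridge + $B$'' recursion, and you yourself identify the unresolved point — the subproblem inside $B$ is no longer of the original balanced form (it has charge $-1$ and a distinguished root $c^*$), so the recursion is not self-reproducing as stated, and the requirement that every row-side subtree of the tree built inside $B$ have strictly positive charge is exactly the combinatorial difficulty, not a routine detail. ``Verify $p=2$, $t\in\{2,3,4\}$ and generalize the pattern'' is not a proof. For comparison, the paper sidesteps this by never using a recursion at all: it chains together $d-s+1$ explicit weighted \emph{paths} (four prototypes, types I--IV, each a path on roughly $2p$ vertices with explicitly prescribed weights $1,p,2,p-1,\dots$ and a designated ``special'' endpoint of weight deficiency one), joined by connector edges of weight one, plus $s-1$ isolated path components; positivity and the marginal conditions are then checked locally on each prototype rather than through a global charge-propagation argument. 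To complete your proof you would need either to carry out the $t>p$ recursion with a correctly formulated inductive hypothesis (specifying the charge of the rooted piece and proving the positive-charge condition at every cut), or to replace it by an explicit family of trees such as the paper's concatenated paths.
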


\begin{proof}
It will be more convenient to construct the 
matrix $B = d^{-1} \times A$ 
(recall that by \propref{propPR5.2}
 all the entries of $A$ must be 
integral multiples of $d$, so that
$B$ would be an integer matrix).
We will obtain
the matrix $B$ by constructing
its associated  graph $G(B) = (U,V,E,w)$.
The graph $G(B)$ will be decomposed from 
a system of $d$ weighted bipartite subgraphs
$G_j = (U_j, V_j, E_j, w_j)$, 
$1 \leq j \leq d$,
together with additional edges that join
vertices between these subgraphs.

The  subgraphs $G_1, \dots, G_d$ will be constructed
from four basic graph prototypes that will be now introduced.
Each prototype is a weighted bipartite graph 
containing no cycles and such that the
edges have positive weights. Furthermore,
the weight of each $U$-vertex is $p+1$ and 
the weight  of each $V$-vertex 
is $p$, with possibly the exception of certain
``special'' vertices whose weight  has deficiency one.
That is, a special $U$-vertex has weight $p$
and a special $V$-vertex has weight $p-1$.

The four graph prototypes are
illustrated in  Figures
\ref{fig:typeiandii} and \ref{fig:typeiiiandiv}.
The first prototype, type I, has $p-1$ vertices in $U$
and $p$ vertices  in $V$, and has one
 special $V$-vertex. The second, 
type II, has $p+1$ vertices in $U$
and $p+2$ vertices  in $V$, and has one
 special $U$-vertex. 
Types III and IV have $p$ vertices in $U$
and $p+1$ vertices  in $V$, but type III
has one  special $U$-vertex and one 
special $V$-vertex, while type IV has
no special vertices.

% ---------------------------------------------------

\begin{figure}[t]
\centering

\begin{subfigure}[t]{0.475\textwidth}
        \centering
\begin{tikzpicture}[scale=0.32, style=mystyle]

% TYPE I

\draw[myroundrect] (-7,-4) rectangle  (15,42);

\coordinate (v1) at (8,4);
\coordinate (u1) at (0,6);
\coordinate (v2) at (8,8);
\coordinate (u2) at (0,10);
\coordinate (v3) at (8,12);
\coordinate (u3) at (0,14);
\coordinate (v4) at (8,16);

\coordinate (vx0) at (8,34);
\coordinate (ux1) at (0,32);
\coordinate (vx1) at (8,30);
\coordinate (ux2) at (0,28);
\coordinate (vx2) at (8,26);
\coordinate (ux3) at (0,24);
\coordinate (vx3) at (8,22);
\coordinate (ux4) at (0,20);

\coordinate (dotsu) at (-2,18);
\coordinate (dotsv) at (10,18);

\draw[specialnode] (vx0)  circle (\mybigcirc);

\draw[myedge] (v1) edge node[vedgelabel]{$\myk$}  (u1);
\draw[myedge] (u1) edge node[vedgelabel]{$1$}  (v2);
\draw[myedge] (v2) edge node[vedgelabel]{$\myk-1$}  (u2);
\draw[myedge] (u2) edge node[vedgelabel]{$2$}  (v3);
\draw[myedge] (v3) edge node[vedgelabel]{$\myk-2$}  (u3);

\draw[myedge] (vx0) edge node[uedgelabel]{$\myk-1$}  (ux1);
\draw[myedge] (ux1) edge node[uedgelabel]{$2$}  (vx1);
\draw[myedge] (vx1) edge node[uedgelabel]{$\myk-2$}  (ux2);
\draw[myedge] (ux2) edge node[uedgelabel]{$3$}  (vx2);
\draw[myedge] (vx2) edge node[uedgelabel]{$\myk-3$}  (ux3);
\draw[myedge] (ux3) edge node[uedgelabel]{$4$}  (vx3);

\draw (dotsu) node{$\vdots$} {};
\draw (dotsv) node{$\vdots$} {};

\draw[myedge, dashed] (u3) -- (v4);
\draw[myedge, dashed] (vx3) -- (ux4);

\draw[vnode] (vx0) node[vnodelabel]{$v_\myk$} circle (\mycirc);
\foreach \j in {1,...,3} { \draw[vnode] (vx\j) node[vnodelabel]{$v_{\myk-\j}$} circle (\mycirc); }
\foreach \j in {1,...,3} { \draw[unode] (ux\j) node[unodelabel]{$u_{\myk-\j}$} circle (\mycirc); }
\foreach \j in {1,...,3} { \draw[vnode] (v\j) node[vnodelabel]{$v_\j$} circle (\mycirc); }
\foreach \j in {1,...,3} { \draw[unode] (u\j) node[unodelabel]{$u_\j$} circle (\mycirc); }

\end{tikzpicture}
    \end{subfigure}
~
\begin{subfigure}[t]{0.475\textwidth}
        \centering

\begin{tikzpicture}[scale=0.32, style=mystyle]

% TYPE II

\draw[myroundrect] (-7,-4) rectangle  (15,42);

\coordinate (v1) at (8,0);
\coordinate (u1) at (0,2);
\coordinate (v2) at (8,4);
\coordinate (u2) at (0,6);
\coordinate (v3) at (8,8);
\coordinate (u3) at (0,10);
\coordinate (v4) at (8,12);

\coordinate (vxp2) at (8,38);
\coordinate (uxp1) at (0,36);
\coordinate (vxp1) at (8,34);
\coordinate (ux0) at (0,32);
\coordinate (vx0) at (8,30);
\coordinate (ux1) at (0,28);
\coordinate (vx1) at (8,26);
\coordinate (ux2) at (0,24);
\coordinate (vx2) at (8,22);
\coordinate (ux3) at (0,20);
\coordinate (vx3) at (8,18);
\coordinate (ux4) at (0,16);

\coordinate (dotsu) at (-2,14);
\coordinate (dotsv) at (10,14);

 \draw[specialnode] (ux0)  circle (\mybigcirc);

\draw[myedge] (v1) edge node[vedgelabel]{$\myk$}  (u1);
\draw[myedge] (u1) edge node[vedgelabel]{$1$}  (v2);
\draw[myedge] (v2) edge node[vedgelabel]{$\myk-1$}  (u2);
\draw[myedge] (u2) edge node[vedgelabel]{$2$}  (v3);
\draw[myedge] (v3) edge node[vedgelabel]{$\myk-2$}  (u3);

\draw[myedge] (vxp2) edge node[uedgelabel]{$\myk$}  (uxp1);
\draw[myedge] (uxp1) edge node[uedgelabel]{$1$}  (vxp1);
\draw[myedge] (vxp1) edge node[uedgelabel]{$\myk-1$}  (ux0);
\draw[myedge] (ux0) edge node[uedgelabel]{$1$}  (vx0);
\draw[myedge] (vx0) edge node[uedgelabel]{$\myk-1$}  (ux1);
\draw[myedge] (ux1) edge node[uedgelabel]{$2$}  (vx1);
\draw[myedge] (vx1) edge node[uedgelabel]{$\myk-2$}  (ux2);
\draw[myedge] (ux2) edge node[uedgelabel]{$3$}  (vx2);
\draw[myedge] (vx2) edge node[uedgelabel]{$\myk-3$}  (ux3);
\draw[myedge] (ux3) edge node[uedgelabel]{$4$}  (vx3);

\draw (dotsu) node{$\vdots$} {};
\draw (dotsv) node{$\vdots$} {};

\draw[myedge, dashed] (u3) -- (v4);
\draw[myedge, dashed] (vx3) -- (ux4);

\draw[vnode] (vxp2) node[vnodelabel]{$v_{\myk+2}$} circle (\mycirc);
\draw[vnode] (vxp1) node[vnodelabel]{$v_{\myk+1}$} circle (\mycirc);
\draw[vnode] (vx0) node[vnodelabel]{$v_{\myk}$} circle (\mycirc);
\foreach \j in {1,...,3} { \draw[vnode] (vx\j) node[vnodelabel]{$v_{\myk-\j}$} circle (\mycirc); }

 \draw[unode] (uxp1) node[unodelabel]{$u_{\myk+1}$} circle (\mycirc); 
 \draw[unode] (ux0) node[unodelabel]{$u_{\myk}$} circle (\mycirc); 
\foreach \j in {1,...,3} { \draw[unode] (ux\j) node[unodelabel]{$u_{\myk-\j}$} circle (\mycirc); }
\foreach \j in {1,...,3} { \draw[vnode] (v\j) node[vnodelabel]{$v_\j$} circle (\mycirc); }
\foreach \j in {1,...,3} { \draw[unode] (u\j) node[unodelabel]{$u_\j$} circle (\mycirc); }

\end{tikzpicture}
    \end{subfigure}

\caption{Graph prototypes of type I (left) and type II (right).}
\label{fig:typeiandii}
 \end{figure}

% ---------------------------------------------------

\begin{figure}[t]
\centering

\begin{subfigure}[t]{0.475\textwidth}
        \centering
\begin{tikzpicture}[scale=0.32, style=mystyle]

% TYPE IIII

\draw[myroundrect] (-7,-4) rectangle  (15,38);

\coordinate (v1) at (8,0);
\coordinate (u1) at (0,2);
\coordinate (v2) at (8,4);
\coordinate (u2) at (0,6);
\coordinate (v3) at (8,8);
\coordinate (u3) at (0,10);
\coordinate (v4) at (8,12);

\coordinate (vxp1) at (8,34);
\coordinate (ux0) at (0,32);
\coordinate (vx0) at (8,30);
\coordinate (ux1) at (0,28);
\coordinate (vx1) at (8,26);
\coordinate (ux2) at (0,24);
\coordinate (vx2) at (8,22);
\coordinate (ux3) at (0,20);
\coordinate (vx3) at (8,18);
\coordinate (ux4) at (0,16);

\coordinate (dotsu) at (-2,14);
\coordinate (dotsv) at (10,14);

\draw[specialnode] (ux0)  circle (\mybigcirc);
\draw[specialnode] (vxp1)  circle (\mybigcirc);

\draw[myedge] (v1) edge node[vedgelabel]{$\myk$}  (u1);
\draw[myedge] (u1) edge node[vedgelabel]{$1$}  (v2);
\draw[myedge] (v2) edge node[vedgelabel]{$\myk-1$}  (u2);
\draw[myedge] (u2) edge node[vedgelabel]{$2$}  (v3);
\draw[myedge] (v3) edge node[vedgelabel]{$\myk-2$}  (u3);

\draw[myedge] (vxp1) edge node[uedgelabel]{$\myk-1$}  (ux0);
\draw[myedge] (ux0) edge node[uedgelabel]{$1$}  (vx0);
\draw[myedge] (vx0) edge node[uedgelabel]{$\myk-1$}  (ux1);
\draw[myedge] (ux1) edge node[uedgelabel]{$2$}  (vx1);
\draw[myedge] (vx1) edge node[uedgelabel]{$\myk-2$}  (ux2);
\draw[myedge] (ux2) edge node[uedgelabel]{$3$}  (vx2);
\draw[myedge] (vx2) edge node[uedgelabel]{$\myk-3$}  (ux3);
\draw[myedge] (ux3) edge node[uedgelabel]{$4$}  (vx3);

\draw (dotsu) node{$\vdots$} {};
\draw (dotsv) node{$\vdots$} {};

\draw[myedge, dashed] (u3) -- (v4);
\draw[myedge, dashed] (vx3) -- (ux4);

\draw[vnode] (vxp1) node[vnodelabel]{$v_{\myk+1}$} circle (\mycirc);
\draw[vnode] (vx0) node[vnodelabel]{$v_{\myk}$} circle (\mycirc);
\foreach \j in {1,...,3} { \draw[vnode] (vx\j) node[vnodelabel]{$v_{\myk-\j}$} circle (\mycirc); }

 \draw[unode] (ux0) node[unodelabel]{$u_{\myk}$} circle (\mycirc); 
\foreach \j in {1,...,3} { \draw[unode] (ux\j) node[unodelabel]{$u_{\myk-\j}$} circle (\mycirc); }
\foreach \j in {1,...,3} { \draw[vnode] (v\j) node[vnodelabel]{$v_\j$} circle (\mycirc); }
\foreach \j in {1,...,3} { \draw[unode] (u\j) node[unodelabel]{$u_\j$} circle (\mycirc); }

\end{tikzpicture}
    \end{subfigure}
~
\begin{subfigure}[t]{0.475\textwidth}
        \centering

\begin{tikzpicture}[scale=0.32, style=mystyle]

% TYPE IV

\draw[myroundrect] (-7,-4) rectangle  (15,38);

\coordinate (v1) at (8,0);
\coordinate (u1) at (0,2);
\coordinate (v2) at (8,4);
\coordinate (u2) at (0,6);
\coordinate (v3) at (8,8);
\coordinate (u3) at (0,10);
\coordinate (v4) at (8,12);

\coordinate (vxp1) at (8,34);
\coordinate (ux0) at (0,32);
\coordinate (vx0) at (8,30);
\coordinate (ux1) at (0,28);
\coordinate (vx1) at (8,26);
\coordinate (ux2) at (0,24);
\coordinate (vx2) at (8,22);
\coordinate (ux3) at (0,20);
\coordinate (vx3) at (8,18);
\coordinate (ux4) at (0,16);

\coordinate (dotsu) at (-2,14);
\coordinate (dotsv) at (10,14);

\draw[myedge] (v1) edge node[vedgelabel]{$\myk$}  (u1);
\draw[myedge] (u1) edge node[vedgelabel]{$1$}  (v2);
\draw[myedge] (v2) edge node[vedgelabel]{$\myk-1$}  (u2);
\draw[myedge] (u2) edge node[vedgelabel]{$2$}  (v3);
\draw[myedge] (v3) edge node[vedgelabel]{$\myk-2$}  (u3);

\draw[myedge] (vxp1) edge node[uedgelabel]{$\myk$}  (ux0);
\draw[myedge] (ux0) edge node[uedgelabel]{$1$}  (vx0);
\draw[myedge] (vx0) edge node[uedgelabel]{$\myk-1$}  (ux1);
\draw[myedge] (ux1) edge node[uedgelabel]{$2$}  (vx1);
\draw[myedge] (vx1) edge node[uedgelabel]{$\myk-2$}  (ux2);
\draw[myedge] (ux2) edge node[uedgelabel]{$3$}  (vx2);
\draw[myedge] (vx2) edge node[uedgelabel]{$\myk-3$}  (ux3);
\draw[myedge] (ux3) edge node[uedgelabel]{$4$}  (vx3);

\draw (dotsu) node{$\vdots$} {};
\draw (dotsv) node{$\vdots$} {};

\draw[myedge, dashed] (u3) -- (v4);
\draw[myedge, dashed] (vx3) -- (ux4);

\draw[vnode] (vxp1) node[vnodelabel]{$v_{\myk+1}$} circle (\mycirc);
\draw[vnode] (vx0) node[vnodelabel]{$v_{\myk}$} circle (\mycirc);
\foreach \j in {1,...,3} { \draw[vnode] (vx\j) node[vnodelabel]{$v_{\myk-\j}$} circle (\mycirc); }

 \draw[unode] (ux0) node[unodelabel]{$u_{\myk}$} circle (\mycirc); 
\foreach \j in {1,...,3} { \draw[unode] (ux\j) node[unodelabel]{$u_{\myk-\j}$} circle (\mycirc); }
\foreach \j in {1,...,3} { \draw[vnode] (v\j) node[vnodelabel]{$v_\j$} circle (\mycirc); }
\foreach \j in {1,...,3} { \draw[unode] (u\j) node[unodelabel]{$u_\j$} circle (\mycirc); }

\end{tikzpicture}
    \end{subfigure}

\caption{Graph prototypes of type III (left) and type IV (right).}
\label{fig:typeiiiandiv}
 \end{figure}

% ---------------------------------------------------

We now use these 
four graph prototypes to
construct  the subgraphs $G_1,\dots,G_d$
as follows. We first let $G_1$  be a graph
of type I. Next, for each $2 \leq j \leq d-s$ we let $G_j$
 be a graph of type III (in the case $s=d-1$ there will
be no subgraphs  of type III). Then, 
we let $G_{d-s+1}$  be a graph
of type II. Finally,  for each $d-s+2 \leq j \leq d$ we let $G_j$
be a graph of type IV (if $s=1$, there will
be no subgraphs  of type IV). 

Next, we 
connect the first 
$d-s+1$ subgraphs
$G_1, \dots, G_{d-s+1}$
by adding edges between them
as follows.  For each $1 \leq j \leq d-s$, we 
add an edge of weight one 
joining the special $V$-vertex of $G_j$
to the special $U$-vertex of $G_{j+1}$.
In  this way, the subgraphs
$G_1, \dots, G_{d-s+1}$ together with
the new edges  become a
single connected component.
Note that we do not add any edges 
to the subgraphs
$G_j$ for  $d-s+2 \leq j \leq d$.

This construction yields a weighted
 bipartite graph $G = (U,V,E,w)$ 
with edges of positive weights,
that has  $dp= n$ vertices in $U$
and $d(p+1) = m$ vertices in $V$.
Moreover, the weight of each $U$-vertex 
is $p+1$ and 
the weight  of each $V$-vertex 
is $p$, with no exceptions
(since by adding  the
new edges we have increased  the weight of every 
special vertex  by exactly one, so that in the final
graph the special vertices have no weight deficiency).
We therefore  have $G=G(B)$ for a certain
$n \times m$ matrix $B$, such that 
$A := d \times B$ is an $n \times m$
  doubly stochastic   array.

Finally, we note that by our construction the
graph $G(B)$, and hence also the graph $G(A)$,
contains no cycles. This implies that
 $A$ is extremal by
\propref{propPR5.4}.
Since the graph $G(A)$ has $n+m$ vertices, this 
also implies
that the number
of edges must be $n + m - r$, where 
 $r$ is the number of 
connected components of the graph. 
But $G(A)$ has exactly $s$ connected 
components, so that 
$|\supp A| = |E| = n+m-s$
as required.
\end{proof}

\subsubsection{}
As an example, if $d=3$, $p=2$, $s=1$, then 
the proof of \lemref{lemE4.1} yields the  $6 \times 9$  doubly  stochastic array
\begin{equation}
\label{eqE4.9.1}
3  \times \begin{bmatrix}
2 & 1    \\
 & & 2 & 1    \\
 & 1 &  & 1 & 1  \\
 &  &  &  &  & 2 & 1 \\
 &  &  &  & 1 &  & 1 & 1 \\
 &  &  &  &  &  &  & 1 & 2 \\
\end{bmatrix}
\end{equation}
which is  extremal and has
 support of the maximal size
$6+9-1 = 14$, while we know that the 
 minimal support size of a
$6 \times 9$  doubly stochastic 
array is $6+9-3 = 12$.

\subsubsection{}
Next, we turn to complete  the
proof of  \thmref{thmE4.9}
by extending 
the result to all values of $n,m$
such that neither one divides the other.
The argument uses 
the Euclidean algorithm 
method to reduce the problem to
the case covered  by  \lemref{lemE4.1}.

\begin{proof}[Proof of  \thmref{thmE4.9}]
Let $d = \gcd(n,m)$.
If $s=d$ then the result is a consequence of 
 \thmref{thmD3.1}, so
 we may suppose that $1 \leq s \leq d-1$.
In particular this means that $d > 1$,
that is, $n,m$ are not coprime.
The construction is done by induction.

Assume that  $n \leq m$, and write
\begin{equation}
\label{eqE4.11}
m = kn + r, \quad k \geq 1, \quad 0 \leq r \leq  n-1.
\end{equation}
The remainder $r$ is an integral multiple of $d$.
We observe that $r$ cannot be zero, since
$n$ does not divide $m$ by assumption.
Hence we must have $d \leq r \leq n-1$.

The induction base case is when $k=1$
 and  $r = d$. In this case
we have $n = dp$, $m = d(p+1)$
for some integer $p$ which must be at least two
 (since $n$ does not divide $m$), and
so the result follows from \lemref{lemE4.1}.

If we have either $k \geq 2$,
or $k=1$ and $d+1  \leq r \leq n-1$, 
then we proceed by induction as follows. 
Suppose that we can find an extremal
 $B \in \S_{n,m-n}$ of support size 
$m - s$. Let $D_n$ be   
the $n \times n$ identity matrix times the scalar $n$, and let
$A = \begin{bmatrix}
D_n & B
\end{bmatrix}$
be the $n \times m$ matrix
obtained by horizontally concatenating 
$D_n$ and $B$. Then $A \in \Snm$ and 
\begin{equation}
\label{eqE4.17}
|\supp A| =  |\supp D_n| + |\supp B|
= n + m-s.
\end{equation}

We claim that $A$ is extremal. To see this,
 it would suffice by \propref{propPR5.4}
to show that 
the graph $G(A)$ contains no cycles. Indeed,  
 $G(A)$ is obtained from $G(B)$ by
adding $n$ new vertices  (corresponding to
the columns of $D_n$) 
where each new vertex is joined 
by a single new edge to some vertex of $G(B)$ 
(since each  column of $D_n$
has exactly one nonzero entry). 
Then all the new vertices have
degree one, i.e.\  they are leaves.
Since  $G(B)$ contains no cycles ($B$ being extremal) and
since adding  leaves creates no cycle in the graph,
we see that also $G(A)$ contains no cycles
and  $A$ is  extremal.

It remains to show that indeed there exists an extremal
 $B \in \S_{n,m-n}$ of support size 
$m - s$. This would follow from
 the inductive hypothesis if we can
verify that (i) neither one of $n, m-n$ divides
the other; and (ii)  $\gcd(n,m-n)=d$.
The property (ii) is obvious, so we turn to 
verify  (i). But property (i) is equivalent to 
the assertion
that $\min \{n, m-n\}$ does not coincide with 
$d = \gcd(n,m-n)$. And indeed, if 
 $k \geq 2$ then we have $d < n < m-n$.
If, on the other hand, 
 $k=1$ and $d+1  \leq r \leq n-1$, 
then $m-n$ is equal to $r$,
and we have $d < r  < n$.
Thus in any case both
(i) and (ii) are established
and the existence of $B$ follows from
 the inductive hypothesis.
\end{proof}

\subsection{Remark}
\label{secd5.1.5}
Suppose that
 neither one of $n,m$ divides the other,
and let $A$ be an $n \times m$ 
doubly  stochastic array
with support of size $n+m-s$.
If $s= \gcd(n,m)$ then this implies
that $A$ is extremal,
by \thmref{thmA6.7} and
\propref{propPR5.8}.
On the other hand, one can show that
for any $1 \leq s \leq \gcd(n,m)-1$,
the condition
$|\supp A| =  n+m-s$
does not imply extremality.

% =======================================

\section{Support structure for extremal $n \times (kn+1)$ arrays}
\label{secD4}

In this section we obtain a structural
characterization of the support of an
extremal $n \times m$ doubly stochastic array
in the special case where $m = kn+1$.
That is,  for these values of $n,m$ we characterize the subsets
$\Om \sbt \{1, \dots, n\} \times \{1, \dots, m\}$
such that $\Om = \supp A$ for some 
extremal $n \times m$   array
(\thmref{thmF1.1}).

As an application, we  will
construct a one-to-one correspondence between
the extremal $n \times (n+1)$
doubly stochastic arrays and
the \emph{trees on $n+1$ labeled vertices 
having their edges also labeled}.
This will allow us to  determine 
the total number of extremal arrays in $\S_{n,n+1}$
(\thmref{thmE12.1}), as well as the number of 
\emph{equivalence classes} of these extremal arrays.

\subsection{Proof of \thmref{thmF1.1}}
Our goal now is to prove 
that if   $m = k n+1$, then a subset
 $\Om \sbt \{1, \dots, n\} \times \{1, \dots, m\}$
is the support of an extremal
array $A \in \Snm$ if and only if $\Om$ satisfies
the following two conditions:

\begin{enumerate-roman}
\item
\label{itH1.2.1}
For each $1 \leq i \leq n$, the set 
$\{j : (i,j) \in \Om\}$ contains 
exactly $k+1$ elements;
\item
\label{itH1.2.2}
 $\Om$ does not contain any cycle, that is,
any sequence of the form
\begin{equation}
\label{eqH1.2.2}
(i_1, j_1), \; (i_2, j_1), \; 
(i_2, j_2), \; (i_3, j_2), \; 
\dots, \;
(i_s, j_s), \; (i_1, j_s)
\qquad (s \geq 2)
\end{equation}
where $i_1, \dots, i_s$ are distinct row indices
and $j_1, \dots, j_s$  are distinct column indices.
\end{enumerate-roman}

We remind the reader  that if an extremal $n \times m$ array with a given support
does exist, then it is unique, due to  \propref{propPR5.71}.

\subsubsection{Proof of the necessity of  conditions 
\ref{itF1.2.1} and \ref{itF1.2.2}}
Suppose that $\Om = \supp A$ where
$A \in \Snm$ is extremal. 
We first show that condition
\ref{itF1.2.1} must hold. 
In fact this follows from
 \cite[Proposition 6]{Car96}
but we give a different proof, 
based on  \thmref{thmA5.2}.

Indeed, 
the entries of $A$ are nonnegative 
and the sum of the entries at each column  is
$n$, so there can be no entry greater than $n$.
Since the  sum of the entries at each row is $kn+1$, then
in each row 
there must be at least $k+1$ nonzero entries.
Now if there was a row with
more than $k+1$ nonzero entries, then
 the size of the support of $A$ would be greater than
$n(k+1) = n+m-1$. But this is not possible due to
 \thmref{thmA5.2},
since $n$ and $m$ are coprime. Hence each
row of $A$ contains exactly $k+1$ nonzero entries,
and since $\Om = \supp A$ this establishes
that condition \ref{itF1.2.1} holds.

To see that condition \ref{itF1.2.2} must hold
as well, we observe that 
if $\Om = \supp A$ then this condition
means that  the graph $G(A)$ contains no cycles.
Hence  condition  \ref{itF1.2.2} follows from
\propref{propPR5.4}.
\qed

\subsubsection{Proof of the sufficiency of  conditions 
\ref{itF1.2.1} and \ref{itF1.2.2}}
We now  turn to prove the  converse part of the 
result. That is, we assume that $\Om$ satisfies
the two conditions 
\ref{itF1.2.1} and \ref{itF1.2.2},
and we will show that there is an
extremal $A \in \Snm$
with $\supp A = \Om$.

We will  obtain
the matrix $A$ by constructing
its associated  graph $G(A) = (U,V,E,w)$.
Let 
$U = \{u_1, \dots, u_n\}$ and
$V = \{v_1, \dots, v_m\}$ be two disjoint
vertex sets constituting the
bipartition of the graph. We define
the  set of edges $E$ to include the edge 
 $\{u_i, v_j\}$ 
if and only if $(i,j) \in \Om$. 
Our goal now is to construct 
a weight function $w$ on $E$, which
assigns  a positive weight to each edge, 
in such a way that
 each vertex from $U$ has total weight  $m$, while
each vertex from $V$ has total weight  $n$.

Consider the unweighted graph $G = (U,V,E)$.
By condition \ref{itF1.2.2}, this
 graph  contains no cycles.
Since the graph has $n+m$ vertices, this implies
that the number
of edges in the graph 
must be $n + m - r$, where 
 $r$ is the number of 
connected components of the graph. 
On the other hand,  it follows from
condition  \ref{itF1.2.1}  that the size
of $\Om$, and hence also the number
of edges in the graph, is equal to
$n(k+1) = n+m-1$. 
The number of 
connected components is therefore one, 
which means that the graph $G$ is connected. 
So $G$ is both connected
and contains no cycles, that is, the graph $G$ is a tree.

For each edge $e = \{u,v\}$ of the graph $G$
($u \in U$, $v \in V$)  we let
$G_e = (U, V, E \setminus \{e\})$
be the  subgraph of $G$ 
obtained by the removal of the edge $e$.
Then the graph $G_e$ has exactly two connected
components. We let
 $H_e$ denote the connected component
of  $G_e$ that contains 
 the vertex $u$. The set of vertices of
$H_e$ is then the union of two
disjoint sets $U_e$ and $V_e$,
where 
$U_e \sbt U$ and $V_e \sbt V$.
We now make the following claim:
\begin{claim*}
For every edge $e = \{u,v\} \in E$ $(u \in U, v \in V)$ we have
\begin{equation}
\label{eqF2.2}
|V_e| = k|U_e|.
\end{equation}
\end{claim*}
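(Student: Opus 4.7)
The plan is to prove the claim by counting the edges of the subtree $H_e$ in two different ways.

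First I would note that since $G$ is a tree (as established right before the claim), any connected subgraph of $G$ is also a tree. In particular, $H_e$ is a tree, so the number of its edges equals its number of vertices minus one, i.e.\ $|U_e| + |V_e| - 1$.

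Next I would compute the same number of edges by summing the degrees of the $U$-vertices of $H_e$. Since $G$ is bipartite with parts $U$ and $V$, every edge of $H_e$ has exactly one endpoint in $U_e$; hence the number of edges of $H_e$ equals the sum of the $H_e$-degrees of the vertices in $U_e$. By condition \ref{itF1.2.1}, each vertex of $U$ has degree $k+1$ in the full graph $G$. The passage from $G$ to $H_e$ removes only the edge $e$, which is incident to $u \in U_e$ and to no other vertex of $U_e$ (since $H_e$ is a connected component of $G_e$ and the other endpoint $v$ of $e$ is not in $H_e$). Consequently the sum of the $H_e$-degrees over $U_e$ equals $(k+1)|U_e| - 1$.

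Equating the two counts yields $|U_e| + |V_e| - 1 = (k+1)|U_e| - 1$, which simplifies to $|V_e| = k|U_e|$, as claimed. I do not expect a serious obstacle here; the only thing to be a little careful about is confirming that removing $e$ decreases the $H_e$-degree of exactly one vertex of $U_e$, namely $u$, and this is immediate from the definition of $H_e$ as the connected component containing $u$ after removal of $e$.
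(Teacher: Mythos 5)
Your proof is correct and is essentially identical to the paper's: both count the edges of $H_e$ once as a tree ($|U_e|+|V_e|-1$) and once via the degree sum over $U_e$, which equals $(k+1)|U_e|-1$ because only the vertex $u$ loses an incident edge when $e$ is removed. Your extra remark about why exactly one $U_e$-degree drops is a nice touch of care, but no new idea beyond the paper's argument.
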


To see this, observe that by condition  \ref{itF1.2.1}
the degree in $G$ of each vertex
from $U$ is exactly $k+1$.
Hence the degree in $H_e$ of each vertex
from $U_e$ is also $k+1$, with the exception of the
vertex $u$ that has degree only  $k$ (since in the graph $G_e$ the
edge $e$ has been removed). Hence
the total number of edges in $H_e$ is equal to
\begin{equation}
\label{eqF2.3}
\sum_{u' \in U_e} \deg_{H_e}(u') = (|U_e|-1)(k+1) + k =  (k+1) |U_e| - 1,
\end{equation}
where  $\deg_{H_e}(u')$ denotes
the degree of the vertex $u'$ in the graph $H_e$.
On the other hand, observe that
the graph $H_e$ does not contain any  cycles
(since it is a subgraph of $G$), and $H_e$ is connected
by its definition. So the number of edges in $H_e$ is
one less than the number of vertices, that is,  $H_e$ must have
exactly
\begin{equation}
\label{eqF2.4}
|U_e| + |V_e| - 1
\end{equation}
edges. Comparing
\eqref{eqF2.3} and \eqref{eqF2.4} we conclude that
\eqref{eqF2.2} holds, which proves the claim.

We now endow the graph $G$ with a weight function $w$ given by
\begin{equation}
\label{eqF2.8}
w(e) := |U_e|, \quad e \in E.
\end{equation}
It is  obvious that this 
weight function is positive. 
Indeed, if $e = \{u,v\}$ 
($u \in U$, $v \in V$) is an edge in $E$, then 
 the set $U_e$ contains at least the vertex $u$,
hence $U_e$ is nonempty and $w(e)$ is a positive integer.

Let us show that 
 each vertex $v \in V$ has total weight  $n$.
Indeed, since the graph $G$ is a tree
(that is, $G$ is connected and
contains no
cycles) 
then the subgraphs $H_e$,
as $e$ goes through the edges 
incident to $v$, are vertex disjoint
and their union contains all 
the vertices of $G$ except $v$ itself.
Hence  the vertex sets $U_e$, as
$e$ goes through the same edges,
are disjoint
and their union is all of $U$.  
The total weight of the vertex $v$ is therefore
\begin{equation}
\label{eqF2.9}
\sum_{e} w(e) = \sum_{e}  |U_e| = |U| = n,
\end{equation}
where in the sum
$e$ goes through the edges 
incident to $v$.

Next  we show that 
 each vertex $u \in U$ has total weight  $m$.
To see this, we fix the vertex $u$ and
recall that $u$ has degree $k+1$ in the graph $G$.
Since  $G$ is a tree, it follows that
a given vertex $v \in V$ belongs
to $H_e$ for every edge $e$ incident to $u$,
with the exception of the edge $e$ that lies
on the unique path connecting $u$ and $v$.
Hence  each vertex $v \in V$ belongs
to exactly $k$ sets $V_e$
as $e$ goes through the edges 
incident to $u$, which yields
\begin{equation}
\label{eqF2.10}
\sum_{e}   |V_e| =  k |V| = km.
\end{equation}
But together with  \eqref{eqF2.2} this implies that
the total weight of the vertex $u$ is
\begin{equation}
\label{eqF2.18}
\sum_{e} w(e) = \sum_{e}  |U_e| 
= \frac1{k} \sum_{e}   |V_e| = m,
\end{equation}
where in the sum
$e$ goes through the edges 
incident to $u$,
in both \eqref{eqF2.10} and  \eqref{eqF2.18}.

We conclude that the weighted graph
$G = (U,V,E,w)$ thus constructed is the graph
associated to an 
$n \times m$ doubly stochastic   array
$A = (a_{ij})$.
Moreover, $A$ is extremal, as the
graph contains no cycles (\propref{propPR5.4}).
Lastly, we notice that $\supp A = \Om$, since the
 matrix entry $a_{ij}$ is nonzero
if and only if 
 $\{u_i, v_j\}$ is an edge in $E$,
which is the case
if and only if $(i,j) \in \Om$. This
completes the proof of \thmref{thmF1.1}. 
\qed

% =======================================

\subsection{An application to extremal $n \times (n+1)$ arrays}
We can now use \thmref{thmF1.1} in order to
construct  a one-to-one correspondence between
the extremal $n \times (n+1)$
doubly stochastic arrays and
the \emph{trees on $n+1$ labeled vertices 
having their edges also labeled}.
This allows us to find
the exact total number of extremal arrays in $\S_{n,n+1}$
(\thmref{thmE12.1}), as well as to determine the number of 
equivalence classes of these extremal arrays.

\subsubsection{}
We begin by introducing the type
of trees that will be used next.

\begin{definition}
\label{defF1}
Let $T$ be a tree  (that is, a graph which
 is both connected
and contains no cycles) on $n+1$ 
vertices.
 We say that
$T \in \TT_n$ if the
vertices of $T$ are labeled 
as $v_1,\dots,v_{n+1}$, and 
the  edges of $T$ are
also labeled (in some order) as $e_1,\dots,e_n$.
\end{definition}

We think of
the  edges $e_1, \dots ,e_n$ of the tree $T \in \TT_n$
as corresponding to the rows of an $n \times (n+1)$
matrix $A$, 
and of  the vertices $v_1, \dots,v_{n+1}$ of the tree
as  corresponding to
the  columns  of $A$. 
We then have the following result:

\begin{thm}
\label{thmE13}
To each tree $T \in \TT_n$ there corresponds 
a unique  extremal array $A \in \S_{n,n+1}$
 with the following property:
$e_i$ is incident to $v_j$ in $T$
if and only if  $(i,j) \in \supp A$,
 for all $i,j$.
Moreover, this correspondence constitutes  a bijection from 
the set of trees in
$\TT_n$ to the set of extremal arrays in $\S_{n,n+1}$.
\end{thm}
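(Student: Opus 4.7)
The idea is to specialize Theorem~\ref{thmF1.1} to $k=1$, $m=n+1$ and exploit the fact that in this regime the two support conditions translate verbatim into the defining axioms of an edge- and vertex-labeled tree on $n+1$ vertices. The correspondence in both directions is the natural incidence dictionary: rows of $A$ $\leftrightarrow$ edges of $T$, columns of $A$ $\leftrightarrow$ vertices of $T$, and a pair $(i,j)$ lies in $\supp A$ if and only if edge $e_i$ is incident to vertex $v_j$.

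\textbf{Forward construction.} Given $T \in \TT_n$, set
\[
\Om_T := \{(i,j) : e_i \text{ is incident to } v_j \text{ in } T\} \sbt \{1,\dots,n\} \times \{1,\dots,n+1\}.
\]
Each edge of a tree has exactly two endpoints, so $\Om_T$ satisfies condition \ref{itF1.2.1} of \thmref{thmF1.1} with $k=1$. To verify \ref{itF1.2.2}, observe that any sequence of the form \eqref{eqF1.2} in $\Om_T$ gives rise in $T$ to an alternating closed walk
\[
v_{j_1},\; e_{i_2},\; v_{j_2},\; e_{i_3},\; \dots,\; e_{i_s},\; v_{j_s},\; e_{i_1},\; v_{j_1}
\]
through distinct vertices and distinct edges, i.e., to a cycle in $T$, which is forbidden since $T$ is a tree. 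By \thmref{thmF1.1} there exists an extremal $A \in \S_{n,n+1}$ with $\supp A = \Om_T$, and this $A$ is unique by \propref{propPR5.71}. I denote this array $A = A(T)$, which verifies the asserted incidence property.

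\textbf{Bijectivity.} Injectivity is immediate, since $\Om_T$ records the full incidence data of the edge- and vertex-labeled tree $T$, so $\Om_T = \Om_{T'}$ forces $T = T'$ in $\TT_n$. For surjectivity, let $A \in \S_{n,n+1}$ be an arbitrary extremal array. By the necessity part of \thmref{thmF1.1}, $\supp A$ satisfies \ref{itF1.2.1} and \ref{itF1.2.2}; in particular each row $i$ has exactly two indices $j$ with $(i,j) \in \supp A$, so one may define a graph $T$ on $\{v_1,\dots,v_{n+1}\}$ whose edge $e_i$ joins those two columns. The same dictionary used above converts condition \ref{itF1.2.2} into the statement that $T$ contains no cycles. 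Hence $T$ is a forest on $n+1$ vertices with $n$ edges, and therefore connected, i.e., a tree in $\TT_n$; by construction $A(T) = A$.

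\textbf{Expected difficulty.} No step is really hard: the substantive content sits inside \thmref{thmF1.1} and \propref{propPR5.71}, which are already available. The only point that demands care is to make sure the combinatorial ``cycle'' condition \ref{itF1.2.2} on the bipartite support matches exactly the graph-theoretic notion of a cycle in $T$, and this is a direct check via the incidence dictionary in both directions.
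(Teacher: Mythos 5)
Your proposal is correct and follows essentially the same route as the paper: both directions reduce to the incidence dictionary between labeled trees and supports satisfying conditions \ref{itF1.2.1}--\ref{itF1.2.2} of \thmref{thmF1.1}, with uniqueness supplied by \propref{propPR5.71}. Your explicit remark that a cycle-free graph on $n+1$ vertices with $n$ edges must be connected is a detail the paper leaves implicit, but the arguments are otherwise the same.
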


\begin{proof}
Given a tree $T \in \TT_n$, let $B = (b_{ij})$ be its
$n \times (n+1)$  incidence matrix,
defined by $b_{ij}=1$ if 
$e_i$ is incident to $v_j$, 
and $b_{ij}=0$ otherwise.
We then claim that $\Om := \supp B$ 
satisfies the conditions \ref{itF1.2.1} and \ref{itF1.2.2}
(with $k=1$)  in \thmref{thmF1.1}.
Indeed, each edge of $T$ is incident with exactly two vertices, 
hence each row of $B$ contains exactly two 
nonzero entries, and  so
\ref{itF1.2.1} is satisfied. To see that
\ref{itF1.2.2} holds as well, we observe that a cycle in  $\Omega$
corresponds to a cycle in the graph $T$.
 But since $T$ is a tree, it contains no cycles,
and \ref{itF1.2.2} follows. We can therefore use 
 \thmref{thmF1.1} to conclude that
$\Omega$ is the support of some extremal array $A \in \S_{n,n+1}$,
having the property that $e_i$ is incident to $v_j$ in $T$
if and only if  $(i,j) \in \supp A$,
 for all $i,j$.
Moreover, such an $A$ is unique, since
an extremal array is uniquely determined by its support 
(\propref{propPR5.71}).

Conversely, given any extremal 
array $A \in \S_{n,n+1}$, we
 construct an
$n \times (n+1)$  matrix
$B = (b_{ij})$ 
defined by $b_{ij}=1$ if 
$(i,j) \in \supp A$,
and $b_{ij}=0$ otherwise.
The conditions
 \ref{itF1.2.1},
 \ref{itF1.2.2}
 in \thmref{thmF1.1} ensure
that $B$ is the incidence matrix
of a cycle-free graph $T$ with $n+1$
vertices labeled 
as $v_1,\dots,v_{n+1}$ and 
with $n$  edges 
labeled as $e_1,\dots,e_n$.
This means that 
 $T$  is a tree in $\TT_n$
with the property that $e_i$ is incident to $v_j$ in $T$
if and only if  $(i,j) \in \supp A$,
 for all $i,j$. Moreover, 
the incidence matrix $B$
determines the tree $T$  uniquely,
which shows that the
correspondence from 
the set of trees in
$\TT_n$ to the set of extremal arrays in $\S_{n,n+1}$
is indeed a bijection.
\end{proof}

\subsubsection*{Remark}
Let $G(A) = (U,V,E,w)$ be the graph associated to an
extremal $n \times (n+1)$  doubly stochastic array $A$,
where
$U = \{u_1, \dots, u_n\}$ and
$V = \{v_1, \dots, v_{n+1}\}$.
Then each vertex $u_i$ is connected  
in the graph $G(A)$ to exactly two $V$-vertices,
that constitute  the endpoints of the edge $e_i$ in the tree
 $T \in \TT_n$ which   corresponds
 to $A$  through the bijection of 
\thmref{thmE13}.
This means that the (unweighted) graph $G(A)$ is 
obtained from the tree $T$
by inserting a new vertex $u_i$ in the middle of each edge 
$e_i$ of $T$.

\subsubsection{}
We can now use the latter  result in order to prove 
\thmref{thmE12.1},
 which asserts that the 
total number of extremal arrays in $\S_{n,n+1}$
is exactly  $n!  (n+1)^{n-1}$.

\begin{proof}[Proof of \thmref{thmE12.1}]
We invoke the classical Cayley's formula, which states that
 the number of trees with $n+1$ labeled vertices
(but with unlabeled edges)  is $(n+1)^{n-1}$.
Notice that there are $n!$ ways to label
 also the edges of such a tree.
Hence  the number of trees in $\TT_n$ 
is exactly  $n!  (n+1)^{n-1}$. 
But the set of trees in
$\TT_n$ 
 has the same cardinality as 
 the set of extremal arrays in $\S_{n,n+1}$,
due to  \thmref{thmE13}, and the result follows.
\end{proof}

  \thmref{thmE13}
also allows us  to  determine the number of 
extremal  $n \times (n+1)$  
doubly stochastic arrays 
\emph{up to equivalence}.
Recall that two arrays are said to be
equivalent if one can be transformed into the other
by a permutation of rows and columns.
Then two equivalent extremal 
arrays in $\S_{n,n+1}$ 
correspond through the bijection 
of   \thmref{thmE13}
 to two isomorphic labeled trees, 
where two  labeled trees are said to be isomorphic 
if one can be transformed into the other
by a permutation of vertex labels and edge labels.
Hence we arrive at the following conclusion:

\begin{thm}
\label{thmE12.2}
The number of  equivalence classes of 
the extremal arrays in $\S_{n,n+1}$ 
is equal to  the
number of unlabeled trees on $n+1$ vertices.
\end{thm}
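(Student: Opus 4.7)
The plan is to leverage the bijection established in \thmref{thmE13} between extremal arrays in $\S_{n,n+1}$ and labeled trees in $\TT_n$, and then to show that this bijection is equivariant with respect to the natural symmetries on each side. Once this equivariance is in hand, the theorem reduces to a standard fact about orbits of labeled trees under relabeling.

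First I would carefully unpack how the bijection of \thmref{thmE13} transports permutations. Since the rows of an array $A \in \S_{n,n+1}$ correspond, in the construction of the associated tree, to the labeled edges $e_1,\dots,e_n$, a permutation of the rows of $A$ exactly amounts to relabeling the edges of the corresponding tree by the same permutation. Likewise, the columns of $A$ correspond to the labeled vertices $v_1,\dots,v_{n+1}$, so a permutation of the columns corresponds to relabeling the vertices. Therefore two extremal arrays $A, A' \in \S_{n,n+1}$ are equivalent (in the sense of the paper) if and only if the corresponding trees $T, T' \in \TT_n$ differ by a permutation of the vertex labels together with a permutation of the edge labels.

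Next I would verify that two trees in $\TT_n$ are related by such a relabeling if and only if their underlying (unlabeled) trees are isomorphic as graphs. The forward direction is immediate, since relabeling does not alter the abstract incidence structure. For the reverse direction, any graph isomorphism between the underlying unlabeled trees induces a bijection on vertex sets and on edge sets which preserves incidence, and these two bijections are precisely the vertex-label and edge-label permutations needed to transform one labeled tree in $\TT_n$ into the other.

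Combining the previous two steps, the equivalence classes of extremal arrays in $\S_{n,n+1}$ are in one-to-one correspondence with the orbits of the natural $S_{n+1} \times S_n$ action on $\TT_n$, and these orbits are precisely the isomorphism classes of unlabeled trees on $n+1$ vertices. The argument is essentially routine once \thmref{thmE13} is set up, and I do not foresee a genuine obstacle; the only point requiring a little care is the equivariance of the bijection, which must be extracted directly from the construction used in the proof of \thmref{thmE13}.
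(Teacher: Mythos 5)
Your proposal is correct and follows essentially the same route as the paper: the paper likewise observes that row and column permutations of an array correspond, under the bijection of \thmref{thmE13}, to permutations of the edge labels and vertex labels of the associated tree, so that equivalence classes of extremal arrays match isomorphism classes of labeled trees, i.e.\ unlabeled trees on $n+1$ vertices. Your write-up merely makes explicit the equivariance check and the identification of label-permutation orbits with unlabeled trees, which the paper leaves implicit.
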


Thus, for instance, any extremal $3 \times 4$
 doubly stochastic array is equivalent to one of the 
two matrices in 
\eqref{eqDS1.3}, since
 there are only two unlabeled trees on $4$ vertices.

Unfortunately, 
no closed-form expression is known for the 
number of unlabeled trees on $n+1$ vertices
as a function of $n$.

% =======================================

\section{Non-equivalent extremal arrays with the same entries}
\label{secD3}

The following question was  posed by Loukaki in \cite[Remark 2]{Lou23}:
Do there exist two extremal arrays
 $A,B \in \Snm$, whose supports are both of
the least possible  size $n+m - \gcd(n,m)$, 
and such that $A,B$ have the same set of entries 
counted with multiplicities but $A$ and $B$ are
not equivalent?

In this section 
we prove that the
 question admits an affirmative answer 
in $\S_{n,n+1}$ for each $n \geq 6$,
by constructing 
an example of two extremal arrays $A,B$
with the properties above 
  (\thmref{thmE11.3}).
We also observe that
the same does not hold  for $n \leq 5$.

Our strategy is based on
the correspondence  in
   \thmref{thmE13}
between the extremal arrays in $\S_{n,n+1}$
and the trees on $n+1$
labeled vertices having their edges also labeled.
Recall that 
two equivalent extremal 
arrays in $\S_{n,n+1}$ 
correspond 
 to two isomorphic  trees.
 Hence our goal is to 
 construct two non-isomorphic 
 trees  which induce the same multiset of entries in the 
corresponding  extremal arrays.

\subsection{}
Let $T$ be a tree in $\TT_n$,
that is, $T$ is a tree  on $n+1$ 
vertices labeled 
as $v_1,\dots,v_{n+1}$, and 
the  edges of $T$ are also
 labeled  as $e_1,\dots,e_n$
(recall  \defref{defF1}).
Suppose that $i,j$
are such that the edge $e_i$
is incident to the vertex $v_j$ in the tree $T$.
If we remove the edge $e_i$
from the tree  then we obtain a subgraph 
$G_i(T)$ with exactly two connected components.
We  denote by  $H_{ij}(T)$  the connected component of 
$G_i(T)$ that does \emph{not} contain the vertex $v_j$.

\begin{thm}
\label{thmF1}
Let $T$ be a tree in $\TT_n$,
and let $A = (a_{ij})$ be the
   extremal array in $\S_{n,n+1}$
which   corresponds
 to $T$  through the bijection of 
\thmref{thmE13}.
For each $(i,j) \in \supp A$, the entry
$a_{ij}$  is equal to the number of vertices 
in the connected component $H_{ij}(T)$.
\end{thm}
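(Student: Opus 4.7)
The plan is to trace through the explicit construction of the weight function $w$ used in the proof of \thmref{thmF1.1} and reinterpret it in terms of the tree $T$. The starting observation is that the bipartite graph $G(A)=(U,V,E,w)$ is essentially the \emph{subdivision} of $T$: the vertex set $V=\{v_1,\dots,v_{n+1}\}$ corresponds to the vertices of $T$, and each edge $e_i$ of $T$, having two endpoints $v_j$ and $v_{j'}$, corresponds to a vertex $u_i \in U$ of degree two, joined in $G(A)$ to exactly $v_j$ and $v_{j'}$. This follows because condition \ref{itF1.2.1} in \thmref{thmF1.1} forces each row of $A$ to have exactly $k+1=2$ nonzero entries.

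Next I would fix $(i,j)\in\supp A$ and compute $a_{ij}=w(\{u_i,v_j\})$ via the formula obtained in the sufficiency half of \thmref{thmF1.1}. In that proof the weight of an edge $e=\{u,v\}$ was defined as $w(e)=|U_e|$, where $U_e$ is the $U$-part of the connected component $H_e$ of $G(A)\setminus\{e\}$ containing $u$. In particular, for our edge $e=\{u_i,v_j\}$ the key equation $|V_e|=k|U_e|$, which in the case $k=1$ reduces to $|V_e|=|U_e|$, gives $a_{ij}=w(\{u_i,v_j\})=|V_e|$. So it is enough to show that $V_e$ coincides with the set of (labeled) vertices of the connected component $H_{ij}(T)$ of $T\setminus e_i$ not containing $v_j$.

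To make this identification, I would argue as follows. Since $u_i$ has degree two in $G(A)$ with neighbors $v_j$ and $v_{j'}$ (where $\{v_j,v_{j'}\}$ are the two endpoints of $e_i$ in $T$), removing the edge $\{u_i,v_j\}$ leaves $u_i$ attached only to $v_{j'}$. Therefore the connected component $H_e$ consists of $u_i$ together with all vertices of $G(A)$ reachable from $v_{j'}$ without using the edge $\{u_i,v_j\}$. Because $G(A)$ is precisely the subdivision of $T$, this set of reachable $V$-vertices is exactly the vertex set of the connected component of $T\setminus e_i$ that contains $v_{j'}$, namely $H_{ij}(T)$. The $U$-vertices of $H_e$ correspond analogously to the edges of $H_{ij}(T)$ together with $u_i$ itself, which is consistent with $|V_e|=|U_e|$ since a tree on $|V_e|$ vertices has $|V_e|-1$ edges.

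Putting the steps together, $a_{ij}=|V_e|$ equals the number of vertices in $H_{ij}(T)$, which is the desired conclusion. The only delicate step is the identification of the connected components of $G(A)\setminus\{u_i,v_j\}$ with those of $T\setminus e_i$, but this is straightforward once the subdivision interpretation of $G(A)$ is pinned down; the rest is a direct appeal to the formula established in the proof of \thmref{thmF1.1}.
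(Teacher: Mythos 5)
Your proposal is correct and follows essentially the same route as the paper: both identify $G(A)$ as the subdivision of $T$ and then evaluate the weight $w(\{u_i,v_j\})=|U_e|$ from the proof of Theorem~\ref{thmF1.1}. The only cosmetic difference is that you convert $|U_e|$ to $|V_e|$ via the identity $|V_e|=k|U_e|$ (with $k=1$) and count $V$-vertices directly, whereas the paper counts the $U$-vertices as edges of $H_{ij}(T)$ plus one; the two counts agree for the reason you note.
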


This result 
 complements  \thmref{thmE13} by
specifying the values of the nonzero entries of the extremal 
$n \times (n+1)$ array $A$
that corresponds to a given tree $T \in \TT_n$.

\begin{proof}[Proof of \thmref{thmF1}]
Let $G(A) = (U,V,E,w)$ be the graph associated to the
extremal
$n \times (n+1)$  array $A$, where
$U = \{u_1, \dots, u_n\}$ and
$V = \{v_1, \dots, v_{n+1}\}$.
By  \thmref{thmE13},
$(i,j) \in \supp A$ if and only if
$e_i$ is incident to $v_j$ in the tree $T$.
Hence  each vertex $u_i$ is connected  
in the graph $G(A)$
  to exactly two $V$-vertices, that constitute 
the endpoints of the edge $e_i$ in  $T$.
This means that the (unweighted) graph $G(A)$ is obtained from the tree $T$
by inserting a new vertex $u_i$ in the middle of each edge 
$e_i$ of $T$.

Now  the matrix entry $a_{ij}$
is  the weight of the edge $\{u_i, v_j\}$
in $G(A)$. We recall from the proof of \thmref{thmF1.1}
(with $k=1$) that
 the weight of the edge $\{u_i, v_j\}$ is
 equal to the number of $U$-vertices in the
 connected component 
containing the vertex $u_i$
in the subgraph of $G(A)$
obtained by the removal of the edge 
$\{u_i, v_j\}$.
But each 
$U$-vertex  corresponds to the edge of 
the tree $T$
 in the middle of which it was inserted,
so the weight of the edge $\{u_i, v_j\}$ is
 also equal to the number of edges 
in the connected component 
$H_{ij}(T)$ 
 together with the edge $e_i$ itself
(which corresponds to the vertex $u_i$).
But this number is the same  as the
number of vertices in $H_{ij}(T)$,
which proves our claim.
\end{proof}

\subsection{}
Consider now the two trees shown in \figref{fig:inductivebase}.
Each tree has  $7$ vertices, but
the trees are not isomorphic, 
since one of them has a vertex of degree $4$ 
while  the other does not. 
We will show that these two trees correspond 
to two non-equivalent
extremal $6 \times 7$ 
doubly stochastic 
arrays $A,B$ that
have the same multiset of entries.

\begin{figure}[t]
\centering

\begin{subfigure}[t]{0.475\textwidth}
        \centering
\begin{tikzpicture}[scale=0.32, style=mystyle]

% TYPE I - BASE

\draw[myroundrect] (-7,0) rectangle  (15,26);

\coordinate (v1) at (0,4);
\coordinate (v2) at (8,4);
\coordinate (v3) at (4,8);
\coordinate (v4) at (4,12);
\coordinate (v5) at (0,16);
\coordinate (v6) at (8,16);
\coordinate (v7) at (0,20);

\draw[myedge] (v1) edge node[edgelabel, right, pos=0.5]{$e_1$}  (v3);
\draw[myedge] (v2) edge node[edgelabel, left, pos=0.5]{$e_2$}  (v3);
\draw[myedge] (v3) edge node[edgelabel, left, pos=0.5]{$e_3$}  (v4);
\draw[myedge] (v4) edge node[edgelabel, right, pos=0.5]{$e_4$}  (v5);
\draw[myedge] (v4) edge node[edgelabel, left, pos=0.5]{$e_6$}  (v6);
\draw[myedge] (v5) edge node[edgelabel, left, pos=0.5]{$e_5$}  (v7);

\foreach \j in {1,...,7} { \draw[vnode] (v\j) 
circle (\mycirc); }

\end{tikzpicture}
    \end{subfigure}
~
\begin{subfigure}[t]{0.475\textwidth}
        \centering
\begin{tikzpicture}[scale=0.32, style=mystyle]

% TYPE II - BASE

\draw[myroundrect] (-7,0) rectangle  (15,26);

\coordinate (v1) at (0,4);
\coordinate (v2) at (0,8);
\coordinate (v3) at (0,12);
\coordinate (v4) at (8,12);
\coordinate (v5) at (4,16);
\coordinate (v6) at (0,20);
\coordinate (v7) at (8,20);

\draw[myedge] (v1) edge node[edgelabel,right,pos=0.5]{$\phi(e_1)$}  (v2);
\draw[myedge] (v2) edge node[edgelabel,right,pos=0.5]{$\phi(e_4)$}  (v3);
\draw[myedge] (v3) edge node[edgelabel, left, pos=0.5]{$\phi(e_3)$}  (v5);
\draw[myedge] (v4) edge node[edgelabel, right, pos=0.5]{$\phi(e_2)$}  (v5);
\draw[myedge] (v5) edge node[edgelabel, left, pos=0.5]{$\phi(e_5)$}  (v6);
\draw[myedge] (v5) edge node[edgelabel, right, pos=0.5]{$\phi(e_6)$}  (v7);

\foreach \j in {1,...,7} { \draw[vnode] (v\j) 
circle (\mycirc); }

\end{tikzpicture}
    \end{subfigure}

\caption{An example of two trees on $7$ vertices which correspond to two non-equivalent  extremal $6 \times 7$ arrays with the same multiset of entries.}
\label{fig:inductivebase}
 \end{figure}

Let $A$ be the 
extremal array corresponding to the left tree in 
 \figref{fig:inductivebase}. Then by
 \thmref{thmF1.1},
each row of $A$ contains exactly two nonzero entries,
and by \thmref{thmF1}
the values of the two nonzero entries at the $i$'th row of $A$
are equal to the number of vertices 
in each one of the two connected components
created by the removal of the edge $e_i$ in the left tree.

For example, removing the edge $e_1$
in the left tree creates two connected components,
 one containing $6$ vertices
and the other 
consisting of a single vertex.
Hence the two nonzero values 
in the first row of $A$ are $6$ and $1$.

Now consider the 
extremal array $B$ corresponding to the 
right  tree in 
 \figref{fig:inductivebase}. 
Notice that we have labeled 
the edges of this tree as
$\phi(e_1), \dots, \phi(e_6)$, namely,
 there is a bijection $\phi$ between the edges of the left 
tree and the edges of the right tree.
It is now straightforward to verify that 
for each $1 \leq i \leq 6$, 
the removal of the edge $e_i$
 in the left tree, or 
the removal of the edge $\phi(e_i)$
in the right tree,
 creates two connected components whose
respective number of vertices
is the same for the two trees.
This implies that  the $i$'th
row of $A$ contains the same two nonzero values as the $i$'th
row of $B$, although these two nonzero values 
may appear on different columns of $A$ and $B$.

For example, removing the edge $\phi(e_1)$
in the right tree creates a connected component
with $6$ vertices and another component
 with a single vertex.
Hence the two nonzero values
in the first row of $B$ are $6$ and $1$,
the same two values as in the first row of $A$.

The    two extremal $6 \times 7$ doubly stochastic 
 arrays obtained from this construction are 
\begin{equation}
\label{eqBase6.1}
\begin{bmatrix}
6 & 1 &  &  &  &  & \\
 & 1 & 6 &  &  &  & \\
 & 4 &  & 3 &  &  & \\
 &  &  & 2 & 5 &  & \\
 &  &  &  & 1 & 6 & \\
 &  &  & 1 &  &  & 6
\end{bmatrix}
\quad \text{and} \quad 
\begin{bmatrix}
 6 & 1 &   &  &  &  &  \\
 &  & 1 & 6 &  &   &  \\
 &  & 3 & & 4 &   & \\
 & 5 &  &  & 2 &  & \\
 &  & 1 &  &  &  6 &  \\
 &  & 1 &  &  &   & 6
\end{bmatrix},
\end{equation}
which indeed 
have the same two nonzero
values in each row, and as a consequence,
have the same multiset of entries.
On the other hand  these two arrays 
are not equivalent, 
as one of them has a column with $4$ nonzero entries, while
the other does not
(which, in turn, is due to
one of the trees having a vertex of degree $4$ while
the other tree not having such a vertex).

We have thus proved the case $n=6$ in \thmref{thmE11.3}.

\subsection{}
We now continue with the
proof of  \thmref{thmE11.3} for all $n \geq 6$.
It would suffice for us to find
two non-isomorphic trees  $T=(V, E)$ and
$ T'=(V', E')$ with $n+1$ vertices each,
and an edge bijection $\phi: E \to E'$,
such that the removal of the edge $e_i$
 in the tree $T$, or 
the removal of the edge $\phi(e_i)$
in the tree $T'$,
 creates two connected components whose
respective number of vertices
is the same for the two trees.

The construction is done by induction
as illustrated in \figref{fig:inductivestep}.
Assume that the two trees on $n$ vertices have
already been constructed.
We then insert a new edge $e_{n}$ into
 the left tree, between the edge $e_{n-1}$ and the
vertex common to $e_3$ and $e_4$.
Similarly, we insert a new edge 
$\phi(e_n)$  into the right tree,
between the edge $\phi(e_{n-1})$ and the
vertex common to $\phi(e_2)$, $\phi(e_3)$ and
$\phi(e_5)$.
Thus we obtain two trees on $n+1$ vertices 
and also extend the edge bijection $\phi$ from the previous
step to  these new trees.

\begin{figure}[t]
\centering

\begin{subfigure}[t]{0.475\textwidth}
        \centering
\begin{tikzpicture}[scale=0.32, style=mystyle]

% TYPE I - STEP

\draw[myroundrect] (-7,0) rectangle  (15,26);

\coordinate (v1) at (-3,4);
\coordinate (v2) at (5,4);
\coordinate (v3) at (1,8);
\coordinate (v4) at (1,12);
\coordinate (v5) at (-3,16);
\coordinate (v6) at (3.5,14.5);
\coordinate (v7) at (-3,20);
\coordinate (v8) at (6,17);
\coordinate (v9) at (8.6, 19.6);
\coordinate (v10) at (11.1, 22.1);
\coordinate (dot1) at (6.8, 17.8);
\coordinate (dot2) at (7.3, 18.3);
\coordinate (dot3) at (7.8, 18.8);

\draw[specialnode] (v4)  circle (\mybigcirc);

\draw[myedge] (v1) edge node[edgelabel, right, pos=0.4]{$e_1$}  (v3);
\draw[myedge] (v2) edge node[edgelabel, left, pos=0.4]{$e_2$}  (v3);
\draw[myedge] (v3) edge node[edgelabel, left, pos=0.5]{$e_3$}  (v4);
\draw[myedge] (v4) edge node[edgelabel, right, pos=0.6]{$e_4$}  (v5);
\draw[myedge, dashed] (v4) edge node[edgelabel, right, pos=0.4]{$e_{n}$}  (v6);
\draw[myedge] (v5) edge node[edgelabel, left, pos=0.5]{$e_5$}  (v7);
\draw[myedge] (v6) edge node[edgelabel, right, pos=0.4]{$e_{n-1}$}  (v8);
\draw[myedge] (v9) edge node[edgelabel, right, pos=0.4]{$e_6$}  (v10);

\foreach \j in {1,...,10} { \draw[vnode] (v\j) 
circle (\mycirc); }

\foreach \j in {1,...,3} {\draw (dot\j) 
node{$\cdot$} {};};

\end{tikzpicture}
    \end{subfigure}
~
\begin{subfigure}[t]{0.475\textwidth}
        \centering
\begin{tikzpicture}[scale=0.32, style=mystyle]

% TYPE II - STEP

\draw[myroundrect] (-7,0) rectangle  (15,26);

\coordinate (v1) at (-3,3);
\coordinate (v2) at (-3,6);
\coordinate (v3) at (-3,9);
\coordinate (v4) at (3,9);
\coordinate (v5) at (0,12);
\coordinate (v6) at (-3,15);
\coordinate (v7) at (2.5,14.5);
\coordinate (v8) at (5, 17);
\coordinate (v9) at (7.8, 19.6);
\coordinate (v10) at (10.3, 22.1);

\coordinate (dot1) at (5.8, 17.8);
\coordinate (dot2) at (6.3, 18.3);
\coordinate (dot3) at (6.8, 18.8);

\draw[specialnode] (v5)  circle (\mybigcirc);

\draw[myedge] (v1) edge node[edgelabel,right,pos=0.5]{$\phi(e_1)$}  (v2);
\draw[myedge] (v2) edge node[edgelabel,right,pos=0.5]{$\phi(e_4)$}  (v3);
\draw[myedge] (v3) edge node[edgelabel, left, pos=0.6]{$\phi(e_3)$}  (v5);
\draw[myedge] (v4) edge node[edgelabel, right, pos=0.6]{$\phi(e_2)$}  (v5);
\draw[myedge] (v5) edge node[edgelabel, left, pos=0.4]{$\phi(e_5)$}  (v6);
\draw[myedge, dashed] (v5) edge node[edgelabel, right, pos=0.4]{$\phi(e_{n})$}  (v7);
\draw[myedge] (v7) edge node[edgelabel, right, pos=0.4]{$\phi(e_{n-1})$}  (v8);
\draw[myedge] (v9) edge node[edgelabel, right, pos=0.4]{$\phi(e_6)$}  (v10);

\foreach \j in {1,...,10} { \draw[vnode] (v\j) 
circle (\mycirc); }

\foreach \j in {1,...,3} {\draw (dot\j) 
node{$\cdot$} {};};

\end{tikzpicture}
    \end{subfigure}

\caption{Constructing the two trees
in the proof of \thmref{thmE11.3}.}
\label{fig:inductivestep}
 \end{figure}

We must show that the new bijection
$\phi$ has the required property.
Indeed, observe that for each $1 \leq i \leq n$,
 the removal of the edge $e_i$ in the left tree, as well as 
 the removal of the 
edge $\phi(e_i)$ in the right tree, creates
one connected component whose number of vertices
is a certain constant $c_i$ which does not depend on $n$,
and a second connected component which
contains the remaining $n+1-c_i$ 
vertices of the tree.

We conclude that  the  two trees thus constructed correspond 
to two extremal $n \times (n+1)$ 
doubly stochastic 
arrays that
have the same two nonzero
values in each row, and hence also
the same multiset of entries.
It remains only to notice that 
the two trees are not isomorphic
(since, as before, 
one of the trees  has a vertex of degree $4$ 
while  the other does not)
and therefore 
the two corresponding extremal arrays are
not equivalent.
The theorem is therefore proved. 
\qed

\subsection{Remark}
We note that the result in 
\thmref{thmE11.3}
 does not hold  for $n \leq 5$.
Indeed, there are
only $13$
unlabeled trees on $n+1$ vertices 
with $1 \leq n \leq 5$ in total
(see, for instance, \cite[Appendix III]{Har69}),
and one can check that no
two of them correspond to
 two extremal doubly stochastic 
arrays that have the same multiset of entries.

% =======================================

\section{Remarks} 
\label{secR1}

We conclude the paper with
some motivational remarks, 
relating the notion of
doubly stochastic arrays
to some other mathematical topics.

\subsection{}
The notion of an $n \times m$ doubly stochastic array admits an obvious
probabilistic interpretation. If $A \in \Snm$ then
the matrix $\frac1{nm} A$ represents a joint
distribution 
of two random variables $X,Y$ 
such that $X$ is uniformly distributed in $\{1,2,\dots,n\}$
and  $Y$ uniformly distributed in $\{1,2,\dots,m\}$.
Thus minimizing the support size of an
array  $A \in \Snm$  is the same as
minimizing the support size of a joint distribution
over all couplings of the two random variables $X,Y$.

\subsection{}
It is also possible to give a combinatorial interpretation
of the  $n \times m$ doubly stochastic arrays as \emph{transportation
plans} as follows. Suppose that we are given $n$
blue bins and $m$ green bins, such that
 each blue bin contains $m$ balls
while the green bins are empty from balls.
The problem is to move the balls from the
blue bins to the green ones, so that each
green bin would contain exactly $n$ balls.
Then a  transportation plan corresponds to
an  integer-valued array $A = (a_{ij})$ in $\Snm$ 
where $a_{ij}$ is the number 
of balls to be moved from the $i$'th blue bin to the
$j$'th green bin. If we seek to minimize
the number of operations 
of the form
``move $a_{ij}$ balls from 
the $i$'th blue bin to the
$j$'th green bin'',
then the minimal number of operations
needed to move the balls from the
blue bins to the green ones as required
is  $n + m - \gcd(n,m)$.
This follows from
 \thmref{thmA6.7}
and the fact that  an array 
in $\Snm$ which
minimizes the
 support size is automatically integer-valued 
(Propositions \ref{propPR5.8}
and \ref{propPR5.2}).

\subsection{}
Let $G$ be a finite abelian group, and suppose that $G$ is
the direct sum of two subgroups $H_1$ and $H_2$
of sizes $n$ and $m$ respectively. A function 
$f$ on the group $G$ is said to
\emph{tile}  by translations
along each one of 
the two subgroups $H_1$ and $H_2$ if we have
\begin{equation}
\label{eqR1.15}
\sum_{s \in H_1} f(t-s) = n, \quad
\sum_{s \in H_2} f(t-s) = m, \quad
t \in G.
\end{equation}

In \cite[Section 4]{KP22} the authors pointed out
a correspondence between 
the $n \times m$ doubly stochastic arrays  and the
\emph{nonnegative} functions $f$  which tile  by translations
along each one of 
the two subgroups $H_1$ and $H_2$.
They posed the question as to
what is the smallest possible size of the support of 
such a function $f$. It follows from
\thmref{thmA6.7}
that the answer  is 
 $n + m - \gcd(n,m)$.
In fact, \thmref{thmD1.1}
implies that the answer remains the same even
for complex-valued functions $f$ 
(that is, even if we do not require $f$ to be nonnegative).

% =======================================

\end{document}